\DeclareMathOperator{\sech}{sech}
\DeclareMathOperator{\diag}{diag}
\DeclareMathOperator{\interior}{int}
\newtheorem{remark}{Remark}
\newtheorem{theorem}{Theorem}[section]
\newtheorem{corollary}[theorem]{Corollary}
\newtheorem{lemma}[theorem]{Lemma}
\newtheorem{definition}{Definition}[section]
\newtheorem*{rmk_main}{Remark on the conditions of Theorem \ref{thm:fractal_dim_H}}
\title{On the dimension of pullback attractors in recurrent neural networks}
\author[1]{Muhammed Fadera$^1$}
\date{}
\begin{document}
\maketitle
\begin{abstract}
Recurrent neural networks trained via the reservoir computing paradigm have demonstrated remarkable success in learning and reconstructing attractors from chaotic systems, often replicating quantities such as Lyapunov exponents and fractal dimensions. It has recently been conjectured that this is because the reservoir computer embeds the dynamics of the chaotic system in its state space before learning. This conjecture has been established for reservoir computers with linear activation functions and remains open for more general reservoir systems. In this work, we employ a non-autonomous dynamical systems approach to establish an upper bound for the box-counting dimension of the pullback attractor, a subset of the reservoir state space that is approximated during training and prediction phases. We prove that the box-counting dimension of the pullback attractor is bounded above by the box-counting dimension of the space of input sequences with respect to the product topology. In particular, for input sequences originating from an $N_\text{in}$-dimensional smooth dynamical system or their generic continuously differentiable observations, the box-counting dimension of the pullback attractor is bounded above by $N_\text{in}$. The results obtained here highlight the fact that, while a reservoir computer may possess a very high-dimensional state space, it exhibits effective low-dimensional dynamics. Our findings also partly explain why reservoir computers are successful in tasks such as attractor reconstruction and the computation of dynamic invariants like Lyapunov exponents and fractal dimensions.  
\end{abstract}

\footnotetext[1] {
        Global Systems Institute, 
        University of Exeter, 
        Exeter, 
        EX4 4QE, 
        Devon, UK}

\section{Introduction}
\label{sec:intro}
Reservoir computing (RC) framework has offered a computationally efficient way to train recurrent neural networks by randomly generating the internal weights and keeping them fixed while focusing training only on the output layer \cite{jaeger2001echo, Maass2002}. This training procedure allows reservoir computers (RCs) to avoid the notorious vanishing and exploding gradient problem \cite{pascanu2013difficulty}, and is one of the main contenders for biologically plausible temporal learning \cite{dominey1995complex, dominey2021cortico}. Since a large class of RCs are universal approximators of fading memory mappings on left infinite input sequences \cite{grigoryeva2018echo, Gonon2021FadingUniversal}, they have achieve state-of-the-art performance on several benchmarks, including time-series classification \cite{hewamalage2021recurrent, LiESN}, systems identification \cite{jaeger2002adaptive}, adaptive filtering \cite{xia2008complex}, attractor reconstruction \cite{lu2018attractor}, anticipating tipping points and critical transitions \cite{panahi2024machine, mitsui2021seasonal}, etc.
\\
\\
Beyond the theory of universal approximations, it is unclear how RCs are able to achieve such performance on these tasks. In the past few years, efforts have been made to understand the decision-making process of such systems by studying them as input-driven dynamical systems \cite{sussillo2013opening, ceni10, flynn2021multifunctionality}. In the case of classification problems with discrete-valued inputs, it has been shown, for example, in \cite{sussillo2013opening, ceni2020interpreting} that echo state networks (ESNs) trained with output feedback take advantage of stable fixed points and their relative locations to solve such tasks. As well as in the continuous output case, errors in ESNs trained on such tasks have been identified with the emergence of spurious/untrained attractors \cite{flynn2021multifunctionality, ceni2020interpreting}. Transitions to such attractors may cause errors over short time steps in classification problems with discrete-valued inputs and over longer time steps in continuous output problems \cite{flynn2021multifunctionality}. Even with such erroneous transitions, it is known, for example, in tasks such as attractor reconstruction, that RCs can maintain the general dynamics of the system they were trained on well beyond when their predictions start to diverge. This property of RCs allows them to learn dynamic invariants, such as Lyapunov exponents and fractal dimensions, of the system they were trained on \cite{lu2018attractor, hart2020embedding, hart2024attractor, pathak2017using}.
\\
\\
Although it is known that the asymptotic dynamics of a reservoir computer synchronises with those of the driving input dynamics \cite{parlitz1996generalized}, this synchronisation may not be sufficient for explaining the performance of reservoir computers \cite{grigoryeva2021chaos}. In \cite{hart2020embedding}, the authors conjectured that reservoir computers driven with generic continuously differentiable ($C^1$) observations of a smooth dynamical system on an $m$-dimensional compact manifold embed the dynamics of the dynamical system in its state space. This conjecture has been settled in the case of linear reservoirs \cite{hart2021strange}. If the conjecture holds in the general case, it will explain why reservoir computers achieve good performance in tasks such as attractor reconstruction.
\\



In this paper, we focus on establishing bounds on the dimension of the synchronised dynamics of the reservoir when driven with a certain class of input sequences. In particular, we investigate the box-counting dimension of the pullback attractor in the nonautonomous dynamical system arising from a large class of RCs of the form 
\begin{align} \label{eq:main}
    \mathbf{x}[n+1] = g(\mathbf{u}[n+1], \mathbf{x}[n])
\end{align}
where the inputs $\mathbf{u}[n] \in U \subseteq \mathbb{R}^{N_{\text{in}}}$, the states $\mathbf{x}[n] \in X \subseteq \mathbb{R}^{N_r}$, and $g$ is some continuous function on a compact set $U$ (the space of inputs) and a compact set $X$ (the state space). We show that under mild conditions on $g$, the box-counting dimension of the pullback attractor associated with input sequences coming from a finite-dimensional subset of $U^\mathbb{Z}$ is bounded above by the box-counting dimension of this subset in the product topology. In the case that the input sequence comes from trajectories of a bi-Lipschitz homeomorphism $\phi$ on $U$, we show that the box-counting dimension of the pullback attractor is bounded above by the box-counting dimension of $U$. Remarkably, this upper bound does not require $g$ to satisfy the echo state property (ESP). However, if $g$ satisfy ESP and the set $U$ contains an open set, we show that the pullback attractor associated with all input sequences in $U^\mathbb{Z}$ also contains an open set in the reservoir space and thus has dimension $N_r$. 
\\
\\
The remainder of this paper is organised as follows. Section \ref{sec:back} describes notations, background and some preliminary results used in this paper. The space $U^\mathbb{Z}$ of input sequences is introduced in Section \ref{ssec:input_seq} together with the product topology on it. We then introduce the space of trajectories of a homeomorphism $\phi$ on $U$ as a subset of $U^\mathbb{Z}$. The dynamics of RNNs are presented in Section \ref{ssec:rnn_dynamics} and how they are trained via the reservoir computing paradigm. After introducing RNNs as nonautonomous (skew-product) dynamical systems in Section \ref{ssec:rnn_nads}, the notion of a pullback attractor and its characterisations are introduced. Section \ref{sec:dim_pull} contains the main contribution of this paper. It begins with the definition and properties of the box-counting dimension. We then give the Theorem of Vishik and Chepyzhov \cite{chepyzhov2002attractors}, providing an upper bound for the box-counting dimension of the pullback attractor for finite-dimensional input sequence spaces (Theorem \ref{thm:vishikchepyzhov}). We show in Section \ref{ssec:finite_dim_input_seqs} that if $g(u, x)$ is continuously differentiable on $U \times X$, then the corresponding nonautonomous dynamical system satisfies the conditions of Theorem \ref{thm:vishikchepyzhov}. We apply Theorem \ref{thm:vishikchepyzhov} to the space of trajectories of $\phi$ on $U$, which is shown to have box-counting dimension equal to that of $U$ (Theorem \ref{prop:dyn_euclidean_prod}). In Section \ref{ssec:input_seq_U_Z}, we shift our focus to the pullback attractor corresponding to all input sequences in $U^\mathbb{Z}$. Using a characterisation of the pullback attractor as the image of a continuous map and under additional genericity conditions on finite-time iterations of $g$, we show in Theorem \ref{thm:fractal_dim_H} that if $U$ contains an open set, the pullback attractor will also contain an open set. Consequently, it has a box-counting dimension equal to the dimension of the reservoir state space $X$. Section \ref{sssec:application_echo} studied the genericity conditions of Theorem \ref{thm:fractal_dim_H} for ESNs. We numerically verify the upper bound of Theorem \ref{thm:vishikchepyzhov} in Section \ref{sec:numerical} by driving moderate sized ESNs with trajectories of the Rössler and Lorenz attractors and estimate the box-counting dimension of the set containing all points on these trajectories after a long washout. We also verify Theorem \ref{thm:fractal_dim_H} by driving ESNs with the periodic input sequences of sufficiently large period. We conclude with some discussions and some open questions related to the work conducted here.

\section{Background} \label{sec:back}
\subsection{Notation}
Let $\mathbb{N}$ denote the set of natural numbers and $\mathbb{N}_0$ denote the set $\mathbb{N} \cup \{0\}$. We begin with some general notation on Euclidean spaces and functions defined on them. Let $m \in \mathbb{N}$ and $u, v \in \mathbb{R}^m$. We endow $\mathbb{R}^m$ with the norm
\begin{align} \label{eq:norm}
    ||u - v|| = \sum_{i = 1}^m | u_i - v_i|.
\end{align}
Suppose $V$ is a subset of $\mathbb{R}^m$. We write $\interior {V}$ to be the interior of $V$ and $\overline{V}$ the closure of $V$, which are the largest open set contained in $V$ and the smallest closed set containing $V$, respectively. Now suppose $V$ is an open subset of $\mathbb{R}^m$ and $Z$ an open subset of $\mathbb{R}^n$. Let $f: V \times Z \to Z$ be a continuously differentiable ($C^1$) function. We denote the derivative of $f$ at $(v, z) \in \mathbb{R}^m \times \mathbb{R}^n$ by $Df(v, z)$, which is a linear map from $\mathbb{R}^{m+n}$ to $\mathbb{R}^m$. The partial derivative of $f$ with respect to the $i^\text{th}$ argument is denoted $D_i f(v, z)$. If $L: \mathbb{R}^{n} \to \mathbb{R}^m$ is a linear map, we write
\begin{align*}
    ||L||_2 = \sup_{||u|| = 1} ||L(u)||
\end{align*}
as the $2$-norm of $L$, which is also the largest singular value of $L$. Obviously, $n \times m$ matrices are linear maps and every linear map from $\mathbb{R}^m$ to $\mathbb{R}^n$ has an $n \times m$ matrix $M$ associated with it \cite{apostol1974mathematical}. 

\subsection{The space of input sequences}\label{ssec:input_seq}
Before introducing the dynamics of RNNs, we will briefly explain the space of input sequences. Suppose $N_{\text{in}}  \in \mathbb{N}$  and  $U$ is compact subset of $\mathbb{R}^{N_{\text{in}}}$ where  $\mathbb{R}^{N_{\text{in}}}$ is equipped with the norm $||\cdot||$. Now consider the Cartesian product $\mathcal{U} = U^\mathbb{Z}$ consisting of all bi-infinite sequences of $N_{\text{in}}$-dimensional vectors $\mathbf{u} = \{\mathbf{u}[n]\}_{n=-\infty}^{\infty}$ where the terms $\mathbf{u}[n] \in U$ is the $n^\text{th}$ of $\mathbf{u}$ for each $n \in \mathbb{Z}$. We reserve the notation $\underline{\mathbf{u}}$ of boldface lowercase letters with an underline for elements of $U^m$ for some $m \ge 1$, while subsets of $U^m$ are denoted by boldface capital letters. Let $w:\mathbb{N}_0 \to (0, 1)$ be a decreasing weighting sequence tending to $0$. We can consider $\mathcal{U}$ as metric space with the metric 
\begin{align*}
    d^w(\mathbf{u}, \mathbf{v}) = \sup_{n \in \mathbb{Z}} w_{|n|} ||\mathbf{u}[n] - \mathbf{v}[n] ||
\end{align*}
which induces the product topology on $\mathcal{U}$, regardless of the choice of weighting sequence \cite[Theorem 2.6]{grigoryeva2018echo}. 
Additionally, $\mathcal{U}$ is compact in this topology by Tychonoff's theorem \cite[Theorem 37.3]{munkres2000topology}. 
\\
\\
We denote $C(\mathcal{U}, X)$ as the space of continuous functions from $\mathcal{U}$ to $X$. Equipped with the uniform metric 
\begin{align} \label{def:unif_metric}
    \rho(H, F) = \sup_{\mathbf{u} \in \mathcal{U}} ||H(\mathbf{u}) - F(\mathbf{u})|| \quad H, F \in C(\mathcal{U}, X), 
\end{align}
$C(\mathcal{U}, X)$ is a complete metric space \cite[Theorem 43.6]{munkres2000topology}.
\\
\\
A special class of input sequences, which is routinely used in reservoir computing, is the space of trajectories from a finite-dimensional dynamical system. If $\phi:U \to U$ is a homeomorphism, we denote the space of all trajectories
\begin{align*}
        (\ldots, \phi^{-2}(u), \phi^{-1}(u), u, \phi(u), \phi^{2}(u), \ldots), \: u \in U;
\end{align*}
as $\mathcal{V}_{(U, \phi)}$. Note that this also includes time-$T$ maps coming from solutions of ordinary differential equations. A suitable choice of  $\mathcal{V}_{(U, \phi)}$ can also be used to represent the space of real-valued observations of dynamical systems on abstract manifolds. In particular, suppose $V$ is a smooth compact manifold of dimension $m$, $\phi:V \to V$ is a diffeomorphism and $\omega: V \to \mathbb{R}$ is a $C^1$ observation function such that the $(2m+ 1)$-delay map 
\begin{align*}
     \Psi(u) = (\omega(u), \omega \circ \phi(u), \ldots, \omega \circ \phi^{2m}(u)), \: u \in V,
\end{align*}    
    is an embedding. Then the space input sequences of the form 
\begin{align*}
        (\ldots, \omega(\phi^{-2}(u)), \omega(\phi^{-1}(u)), \omega(u), \omega(\phi(u)), \omega(\phi^{2}(u)), \ldots), \: u \in V;
\end{align*}
can be represented as the set $\mathcal{V}_{(\Psi(V), \Psi \circ \phi \circ \Psi^{-1})}$. By Takens' delay-embedding theorem \cite{takens2006detecting}, the assumption that $\Psi(u)$ is an embedding holds for a generic choice of $\omega \in C^1$. 
\subsection{Dynamics of recurrent neural networks} \label{ssec:rnn_dynamics}
Given an initial condition $\mathbf{x}[0] \in \mathbb{R}^{N_r}$ with $N_r \in \mathbb{N}$, the dynamics of the internal states of an RNN with $N_r$ hidden units or $N_r$ dimensions at time-step $n$, $n \in \mathbb{N}_0$ driven by an input sequence $\mathbf{u} \in  \mathcal{U}^{}$ is given by
\begin{align}
    \mathbf{x}[n + 1] &= f{(W\mathbf{x}[n]+ W^\text{in}\mathbf{u}[n+1]+ W^\text{fb} \mathbf{y}[n])} \label{eq:rnn_state} \\
    \mathbf{y}[n+1]&= \varphi(W^\text{out} \mathbf{x}[n + 1]) \label{eq:rnn_readout}
\end{align}
where $W \in \mathbb{R}^{N_r \times N_r}$ is called the recurrent matrix, $W^\text{in} \in \mathbb{R}^{N_r \times N_\text{in}}$ is the input-to-network coupling, $W^\text{fb} \in \mathbb{R}^{N_r \times N_\text{out}}$ feeds back the output into the network and $W^\text{out} \in \mathbb{R}^{N_r \times N_\text{out}}$ is the readout weights. The functions $\varphi$ and $f$ are activation functions applied component-wise. In reservoir computing, the weights $W$, $W^\text{in}$ and $W^\text{fb}$ are initialised randomly and then kept fixed during training, while $W^\text{out}$ is optimised for the RNN to solve a particular task \cite{jaeger2001echo}. The activation function $f$ is usually taken to be the hyperbolic tangent $\tanh$ and $\varphi$ the identity. 
This architecture, known as an echo state network (ESN), is the most commonly used architecture in reservoir computing. ESNs can also be adapted for multiple time-scales \cite{LiESN, tallec2018can} using the Leaky integrator dynamics
\begin{align} \label{eq:leaky_rnn_1pdate}
    \mathbf{x}[n+1] &= (1-\alpha)\mathbf{x}[n] + \alpha f{(W\mathbf{x}[n] + W^\text{in}\mathbf{u}[n+1] + W^\text{fb} \mathbf{y}[n]}) 
\end{align}
at leak rate $\alpha$ \cite{LiESN}. Thus, the dynamics of an RNN can be written as 
\begin{align}
    \mathbf{x}[n+1] = g(\mathbf{u}[n+1], \mathbf{x}[n]).
\end{align}
for some function $g$. 
\subsection{RNNs as nonautonomous dynamical system}\label{ssec:rnn_nads}
The input-driven dynamics of RNNs are best understood in the lens of nonautonomous dynamical systems \cite{KloedenNonautonomousSystems}. There are two main ways of dealing with nonautonomous dynamical systems: as a process or a skew product dynamical system \cite{KloedenNonautonomousSystems}.  The skew product flow formulation is also closely related to the systems of processes formulation \cite{chepyzhov2002attractors}, which is sometimes easier to work with (see, for example \cite{cunha2024smoothing}). The general idea behind these two approaches is to study the combined dynamics of the nonautonomous inputs (also known as parameters) along with the states. For RNNs, the process formulation is useful when dealing with the properties of a single input sequence \cite{mane2006dimension}, while the skew product flow formulation allows more general treatment of all input sequences taking values in a compact set \cite{Ceni2020echo}. Since we are concerned with all input sequences taking values in $U$, we will use the skew product flow formulation in this paper. 
\\

The dynamics in the space of input sequences $\mathcal{U}$ is defined in terms of shifts. We define the canonical projections
\begin{align*}
\pi_{n}:\mathcal{U} &\to  U, \\
\pi_{n}(\mathbf{u}) &= \mathbf{u}[n].
\end{align*}
Let 
\begin{align*}
    \theta:\mathcal{U}& \to \mathcal{U} \\
    \{\mathbf{u}[n]\}_{n=-\infty}^{\infty} & \to \{\mathbf{u}[n+1]\}_{n=-\infty}^{\infty}
\end{align*}
be the left shift operator. The left shift operator is continuous (with respect to the product topology) since each of its projections $\pi_n(\theta(\mathbf{u})) = \mathbf{u}[n+1]$ is continuous as a function from $\mathcal{U}$ to $U$ \cite[Theorem 19.6]{munkres2000topology}. 
Furthermore, it is also invertible with inverse $\theta^{-1}$ as the right shift operator satisfying $\pi_n(\theta^{-1}(u)) = \mathbf{u}[n-1]$ which is also continuous by the preceding argument. 
Thus $\{\theta^n\}_{n \in \mathbb{Z}}$ is a discrete-time dynamical system on the state space $\mathcal{U}$ of input sequences with $\theta^0(\mathbf{u}) = \mathbf{u}$ and 
    \begin{align*}
        \theta^{m+n}(\mathbf{u}) = \theta^m \circ \theta^n(\mathbf{u}) \quad \text{for all }m, n \in \mathbb{Z}_{} \: \text{and for all }\: \mathbf{u} \in \mathcal{U}.
    \end{align*}
Now define the skew product flow on $\mathcal{U} \times X$.
\begin{definition}[Skew product flow, \cite{KloedenNonautonomousSystems}] \label{def:skewprod}
Suppose $U$ and $X$ are compact subsets of $\mathbb{R}^{N_\text{in}}$ and $\mathbb{R}^{N_r}$, respectively. Let $g: U \times X \to \mathbb{R}^{N_r}$ be a continuous map. We define the cocycle mapping $\Phi:\mathbb{N}_0^{} \times U \times X \to X$ as 
    \begin{align}
       & \Phi(0, \mathbf{u}  , x) := x \quad  \forall \mathbf{u}  \in  \mathcal{U}  \:\text{ and  }\: \forall x \in X, \label{eq:skewprod0} \\
       & \Phi(n, \mathbf{u} , x) := g(\pi_0(\theta^n(\mathbf{u})), \Phi(n-1,  \mathbf{u} , x)) = g(\mathbf{u}[n], \Phi(n-1,  \mathbf{u} , x)) \quad \forall  n \in \mathbb{N}.  \label{eq:skewprod1}
    \end{align}
    The pair $(\theta, \Phi)$ is called a skew product flow with base space $\mathcal{U}$ and state space $X$.
\end{definition}

The map $\Phi$ is called a cocycle because it satisfies the cocycle property. 

\begin{definition}[The cocycle property]\label{lmm:cocycle_property}
    The map $\Phi$ satisfies the cocycle property \textit{i.e.} for any $\mathbf{u} \in \mathcal{U}$ and $x \in X$,
\begin{align} \label{eq:cocycle_property}
    \Phi(n+m, \mathbf{u}, x) = \Phi(n, \theta^m (\mathbf{u} ), \Phi(m, \mathbf{u} , x)) \quad \text{for all } m, n \in \mathbb{N}_0 ^{}. 
\end{align}
\end{definition}

To see that $\Phi$ indeed satisfies the cocycle property, let $\mathbf{u} \in \mathcal{U}$, $x \in X$ and $m \in \mathbb{N}_0$ be fixed. Arguing by induction on $n$, we have 
    \begin{align*}
        \Phi(n,\theta^m( \mathbf{u}), \Phi(m, \mathbf{u}, x)) & =  g(\pi_0(\theta^{n}(\theta^m(\mathbf{u}))), \Phi(n-1, \theta^m( \mathbf{u}), \Phi(m, \mathbf{u}, x))) \\
        &=  g(\pi_0(\theta^{m+n}(\mathbf{u})), \Phi(n-1, \theta^m( \mathbf{u}), \Phi(m, \mathbf{u}, x))) \\
        &= g(\pi_0(\theta^{m+n}(\mathbf{u})), \Phi(m+n-1,\mathbf{u},  x)) \\
        &= \Phi(m + n, \mathbf{u},  x).
    \end{align*}
\subsubsection{Entire solutions, the Echo State Property and Pullback attraction}
The dynamics of the skew product flow $(\theta, \Phi)$ is studied in $X$ in terms of a family of subsets $\mathbf{A}_\mathbf{u}$ of $X$ indexed by $\mathbf{u}$. The set $\mathbf{A} = \{\mathbf{A}_\mathbf{u}\}_{\mathbf{u} \in \mathcal{U}}$ is known as a nonautonomous set, and $\mathbf{A}_\mathbf{u}$ the $\mathbf{u}$-fibre \cite{KloedenNonautonomousSystems}\footnote{In general, a $\mathbf{u}$-fibre of nonautonomous set can be thought of as the subset of the state space $X$ explored by $\mathbf{u}$ in relation to a studied property. For example, the trivial nonautonomous set $\mathbf{A} = \{X\}_{\mathbf{u} \in \mathcal{U}}$ describes the state space the input-driven dynamics of each input sequence live.}. A nonautonomous set $\mathbf{A}$ is said to be non-empty (respectively compact) if each of the fibres is non-empty (resp. compact). 
\begin{definition} 
A non-empty and compact nonautonomous set $\mathbf{A} = \{\mathbf{A}_\mathbf{u}\}_{\{\mathbf{u} \in \mathcal{U}\}}$ is said to be $\Phi$-invariant if 
\begin{align}
   \Phi(n, \mathbf{u}, \mathbf{A}_\mathbf{u}) = \mathbf{A}_{\theta^n(\mathbf{u})}\quad  \text{for all } \: n \in \mathbb{N}, \mathbf{u} \in \mathcal{U}.\label{eq:nonauto_invariance}
\end{align} 
\end{definition}
For each input sequence $\mathbf{u}$, a $\Phi$-invariant set is composed of terms of sequences $\{\mathbf{x}[n]\}_{n \in \mathbb{Z}}$ that solve the input-driven relation \eqref{eq:main} for all $n \in \mathbb{Z}$. These sequences are referred to as entire/complete solutions.  
\begin{definition}
 A sequence $\{\mathbf{x}[n]\}_{n \in \mathbb{Z}}$ is called an entire solution for $\mathbf{u}$ if 
\begin{align*}
    \mathbf{x}[n] = \Phi(n-m, \theta^m(\mathbf{u}), \mathbf{x}[m])\quad \text{ for all } m, n \in \mathbb{Z}, n \ge m. 
\end{align*}   
\end{definition}

Although an entire solution is guaranteed to exist in a $\Phi$-invariant set for every input sequence \cite[Chapter 2, Lemma 2.20]{KloedenNonautonomousSystems}, such a solution may not be unique. This is the case if for a fixed $u \in U$, $g(u, \cdot):X \to X$ is constant, in which case every bi-infinite sequence in $X^\mathbb{Z}$ is an entire solution for $\mathbf{u} = \{u\}_{n \in \mathbb{Z}}$. The property that every input sequence has a unique entire solution associated with it is called the echo state property (ESP). 

\begin{definition}[Echo state property]
    The skew product flow $(\theta, \Phi)$ is said to have the echo state property if for any input sequence $\mathbf{u}$, there exists a unique entire solution associated with it. 
\end{definition}
\begin{remark}
    While the ESP is a desirable property is reservoir computers, it has been shown that RNNs satisfying the related property of echo-index bigger than $1$ \cite{Ceni2020echo} admits multiple entire solutions which attracts the entire state space $X$ except possibly a small subset (in the sense of Lebesgue). 
\end{remark}



    
Pullback attractors are a particular type of $\Phi$-invariant sets that generalise the concept of attractions for non-autonomous dynamical systems. In what follows, 
\begin{align*}
    \text{dist}(A, B) := \sup_{a \in A} {\inf_{b \in B} d (a, b) }
\end{align*}
is the Hausdorff semi-metric between two non-empty subsets $A$, $B$ of a metric space $(Z, d)$. 

\begin{definition}[Pullback Attractor]
A $\Phi$-invariant set $\mathbf{A} = \{\mathbf{A}_\mathbf{u}\}_{\mathbf{u} \in \mathcal{U}}$ is called the pullback attractor of the skew-product flow $(\theta, \Phi)$ if for each input sequence $\mathbf{u} \in \mathcal{U}$ and for every bounded subset $B$ of $X$, we have
\begin{align*}
    \lim_{n \to  \infty} \normalfont \text{dist}(\Phi(n, \theta^{-n}(\mathbf{u}), B), \mathbf{A}_{\mathbf{u}}) = 0. 
\end{align*}
\end{definition}

If $g(U, X)$ is bounded, the fibres of the pullback attractor satisfy \cite[Corollary 1.17]{robinson2012attractors} 
\begin{align} \label{eq:robinson}
    \mathbf{A}_{\mathbf{u}} = \{ \mathbf{x}[0]: \mathbf{x} \text{ is an entire solution associated to } \mathbf{u}\}.
\end{align}
Hence, if a cocycle satisfies the echo state property, the $\mathbf{u}$-fibre of the pullback attractor is $\{\mathbf{x}[0]\}$ where $\mathbf{x}$ is the unique entire solution associated with $\mathbf{u}$.

\subsubsection{Pullback Attractor as the image of a continuous map} \label{ssec:pull_continuous}
In this section, we will show that if $g(u, x)$ is a contraction in $x$ $i.e$ there exist $0 < \mu < 1$ such that 
\begin{align*}
    ||g(u, x) - g(u, z)|| \le \mu ||x - z||,
\end{align*}
then the fibres of the pullback attractor can be obtained as the image of a continuous map $H$ from $\mathcal{U}$ to $\mathbb{R}^{N_r}$. This result has appeared, for example, in \cite{hart2020embedding, grigoryeva2021chaos} as the echo state mapping theorem when $g$ is driven with $C^1$ scalar observations of a diffeomorphism $\phi$ on a compact $m$-manifold $V$. The map $H$ in this case is called a generalised synchronisation (GS)  \cite{parlitz1996generalized}, and is known to be continuously differentiable under additional assumptions relating the rate of contraction $\mu$ to the derivative of $\phi^{-1}$. The GS maps the time evolution of $\phi$ on $V$ to the time evolution of (\ref{eq:main}) and as such, a desirable property of the GS is that it is a diffeomorphism onto its image (an embedding in the sense of differential topology). This ensures that the quantities, such as Lyapunov exponents and fractal dimension, are preserved. The question of whether or not the generalised synchronisation is an embedding is still open in the general case \cite{hart2020embedding} and settled for $g(u, x) = Wx + W^\text{in}\omega(u)$ \cite{hart2021strange} where $\omega:V \to \mathbb{R}$ is a generic $C^1$ scalar observation of the dynamical system, $W$ and $W^\text{in}$ are generic choices of weight matrices.  
\\
\begin{theorem} \label{thm:the_map_H}
    Suppose that the map $g:U \times X \to X$ is uniformly a $\mu$-contraction in the state variable. Let $H_0 \in C(\mathcal{U}, X)$ and $H_n$ be defined as
\begin{align} \label{eq:H_n}
    H_{n+1}(\mathbf{u}) = \Phi(1, \mathbf{u}, H_{n}(\theta^{-1}(\mathbf{u}))) \quad \text{for }\: n \in \mathbb{N}_0
    \end{align}
    
    Then there exists a unique continuous map $H \in C(\mathcal{U}, X)$ such that $\lim_{n \to \infty} H_n = H$ with respect to the uniform metric $\rho$ on $C(\mathcal{U}, X)$ for all initial choice $H_0 \in C(\mathcal{U}, X)$. 
\end{theorem}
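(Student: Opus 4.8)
The plan is to recognise the recursion \eqref{eq:H_n} as the iteration of a single operator on the complete metric space $(C(\mathcal{U}, X), \rho)$ and to invoke the Banach fixed point theorem. Define the operator $T : C(\mathcal{U}, X) \to C(\mathcal{U}, X)$ by
\begin{align*}
    (TF)(\mathbf{u}) = \Phi(1, \mathbf{u}, F(\theta^{-1}(\mathbf{u}))) = g(\mathbf{u}[1], F(\theta^{-1}(\mathbf{u}))),
\end{align*}
so that \eqref{eq:H_n} reads $H_{n+1} = T H_n$, and hence $H_n = T^n H_0$ for every $n \in \mathbb{N}_0$. The theorem then amounts to showing that $T$ is a contraction.

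First I would check that $T$ is well defined, i.e.\ that $TF \in C(\mathcal{U}, X)$ whenever $F \in C(\mathcal{U}, X)$. The map $TF$ is the composition $\mathbf{u} \mapsto (\pi_1(\mathbf{u}), F(\theta^{-1}(\mathbf{u}))) \mapsto g(\pi_1(\mathbf{u}), F(\theta^{-1}(\mathbf{u})))$; here $\pi_1$ and $\theta^{-1}$ are continuous on $\mathcal{U}$ (Section~\ref{ssec:rnn_nads}), $F$ is continuous, and $g$ is continuous, so $TF$ is continuous, and since $g$ takes values in $X$ the map $TF$ lands in $X$. Thus $T$ acts on $C(\mathcal{U}, X)$, which is a nonempty complete metric space under $\rho$ — it contains the constant maps, it is complete by \cite[Theorem 43.6]{munkres2000topology}, using compactness of $\mathcal{U}$ (Tychonoff) and of $X$.

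Next I would establish the contraction estimate. For $F, G \in C(\mathcal{U}, X)$ and any $\mathbf{u} \in \mathcal{U}$, the uniform $\mu$-contraction hypothesis on $g$ in the state variable gives
\begin{align*}
    ||(TF)(\mathbf{u}) - (TG)(\mathbf{u})|| = ||g(\mathbf{u}[1], F(\theta^{-1}(\mathbf{u}))) - g(\mathbf{u}[1], G(\theta^{-1}(\mathbf{u})))|| \le \mu\, ||F(\theta^{-1}(\mathbf{u})) - G(\theta^{-1}(\mathbf{u}))|| \le \mu\, \rho(F, G).
\end{align*}
Taking the supremum over $\mathbf{u} \in \mathcal{U}$ yields $\rho(TF, TG) \le \mu\, \rho(F, G)$ with $0 < \mu < 1$, so by the Banach fixed point theorem $T$ has a unique fixed point $H \in C(\mathcal{U}, X)$, and for every $H_0 \in C(\mathcal{U}, X)$ the iterates $H_n = T^n H_0$ converge to $H$ in $\rho$. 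In particular the limit is independent of $H_0$, and the fixed-point identity $H(\mathbf{u}) = g(\mathbf{u}[1], H(\theta^{-1}(\mathbf{u})))$ is precisely the generalised-synchronisation relation used later.

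There is no serious obstacle here beyond bookkeeping: the only point needing care is the well-definedness of $T$ — the continuity of the composition in the product topology, and the fact that in this theorem $g$ is assumed to map $U \times X$ into $X$ (rather than merely into $\mathbb{R}^{N_r}$ as in Definition~\ref{def:skewprod}), so that $X$ is forward invariant and the iteration stays in $C(\mathcal{U}, X)$. If one prefers to avoid quoting the abstract fixed point theorem, the same conclusion follows from the standard estimate $\rho(H_{n+1}, H_n) \le \mu^n \rho(H_1, H_0)$, which makes $(H_n)$ Cauchy and hence convergent by completeness to a fixed point, with uniqueness forced by $\rho(H, H') = \rho(TH, TH') \le \mu\, \rho(H, H')$.
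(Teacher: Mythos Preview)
Your proposal is correct and follows essentially the same route as the paper: define the operator $T$ (the paper calls it $\mathbf{\Psi}$) on the complete metric space $C(\mathcal{U},X)$, verify the $\mu$-contraction estimate via the same one-line computation, and invoke the Banach fixed-point theorem. Your extra care about well-definedness of $T$ (continuity of the composition and that $g$ maps into $X$) is a welcome detail the paper leaves implicit.
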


\begin{proof}
    The proof will follow the arguments of the proof of \cite[Theorem 2.2.2]{hart2020embedding}. Let $\mathbf{\Psi}:C(\mathcal{U}, X) \to C(\mathcal{U}, X)$ be the map $H \to \Phi(1, \theta^0(\cdot), H(\theta^{-1}(\cdot))$. We will show that $\mathbf{\Psi}$ is a $\mu$-contraction on $C(\mathcal{U}, X)$. Let $F, H \in C(\mathcal{U}, X)$. Then 
    \begin{align*}
        \rho(\mathbf{\Psi}(F), \mathbf{\Psi}(H)) &= \sup_{\mathbf{u} \in \mathcal{U}} ||\Phi(1, \mathbf{u}, H(\theta^{-1}(\mathbf{u}))) -  \Phi(1, \mathbf{u}, F(\theta^{-1}(\mathbf{u})))|| \\
        &= \sup_{\mathbf{u} \in \mathcal{U}} ||g(\mathbf{u}[1], \Phi(0, \mathbf{u}, H(\theta^{-1}(\mathbf{u})))) - g(\mathbf{u}[1], \Phi(0, \mathbf{u}, F(\theta^{-1}(\mathbf{u})))|| \\
        &= \sup_{\mathbf{u} \in \mathcal{U}} ||g(\mathbf{u}[1], H(\theta^{-1}(\mathbf{u}))) - g(\mathbf{u}[1], F(\theta^{-1}(\mathbf{u})))|| \\
        &\le \sup_{\mathbf{u} \in \mathcal{U}} \mu \cdot ||H(\theta^{-1}(\mathbf{u}))- F(\theta^{-1}(\mathbf{u}))|| \\
        &= \mu \sup_{\mathbf{u} \in \mathcal{U}} ||H(\theta^{-1}(\mathbf{u}))- F(\theta^{-1}(\mathbf{u}))|| \\
        &= \mu \rho(F, H). 
    \end{align*}
    So $\mathbf{\Psi}$ is a $\mu$-contraction on $C(\mathcal{U}, X)$. Since $C(\mathcal{U}, X)$ is a complete metric space, by Banach fixed-point theorem, $\mathbf{\Psi}$ has a unique fixed point in $C(\mathcal{U}, X)$. That is, there exists a unique continuous map $H \in C(\mathcal{U}, X)$ such that $\mathbf{\Psi}(H) = H$.  
\end{proof}

Now we will show that the sequence $\mathbf{x} = \{H(\theta^{n-1}(\mathbf{u}))\}_{n\in \mathbb{Z}}$ is the unique entire solution associated to $\mathbf{u}$. Consequently, by \eqref{eq:robinson}, $\mathbf{A} = \{H(\theta^{-1}(\mathbf{u}))\}_{\mathbf{u} \in \mathcal{U}}$ is the unique pullback attractor of the cocycle $(\theta, \Phi)$. This also means that when training RCs, for any choice of initial condition, the dynamics of the reservoir will become close to the fibres of the pullback attractor after a long washout. The rate of contraction to the pullback attractor is controlled by $\mu$.

\begin{theorem} \label{thm:H_entire_sln}
    Under the assumption of Theorem \ref{thm:the_map_H}, the sequence $\mathbf{x} = \{H(\theta^{n-1}(\mathbf{u}))\}_{n\in \mathbb{Z}}$ is the unique entire solutions associated to $\mathbf{u} \in \mathcal{U}$. 
\end{theorem}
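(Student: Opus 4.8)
The plan is to verify three facts about the candidate sequence $\mathbf{x} = \{H(\theta^{n-1}(\mathbf{u}))\}_{n\in\mathbb{Z}}$ (which lies in $X$ since $H\colon\mathcal{U}\to X$): that it solves the recursion \eqref{eq:main}, that this upgrades to the defining identity of an entire solution, and that no other entire solution exists. Only the last point uses the contraction hypothesis in an essential way; the first two are formal manipulations with the shift $\theta$ and the definition of $\Phi$.

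\textbf{Step 1 (one-step relation) and Step 2 (entire solution).} By Theorem~\ref{thm:the_map_H}, $H$ is the fixed point of $\mathbf{\Psi}$, so $H(\mathbf{v}) = \Phi(1,\mathbf{v},H(\theta^{-1}(\mathbf{v})))$ for every $\mathbf{v}\in\mathcal{U}$. Taking $\mathbf{v}=\theta^{n}(\mathbf{u})$ and using $\theta^{-1}\circ\theta^{n}=\theta^{n-1}$ gives $\mathbf{x}[n+1]=H(\theta^{n}(\mathbf{u}))=\Phi(1,\theta^{n}(\mathbf{u}),\mathbf{x}[n])$, and unwinding \eqref{eq:skewprod0}--\eqref{eq:skewprod1} this equals $g(\pi_0(\theta^{n+1}(\mathbf{u})),\mathbf{x}[n])=g(\mathbf{u}[n+1],\mathbf{x}[n])$; hence $\mathbf{x}$ satisfies \eqref{eq:main} for all $n\in\mathbb{Z}$. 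Next fix $m\in\mathbb{Z}$ and prove $\mathbf{x}[m+k]=\Phi(k,\theta^{m}(\mathbf{u}),\mathbf{x}[m])$ for all $k\in\mathbb{N}_0$ by induction on $k$: the case $k=0$ is \eqref{eq:skewprod0}, and for the step, \eqref{eq:skewprod1} together with $\theta^{k+1}\circ\theta^{m}=\theta^{m+k+1}$ yields $\Phi(k+1,\theta^{m}(\mathbf{u}),\mathbf{x}[m])=g(\mathbf{u}[m+k+1],\mathbf{x}[m+k])=\mathbf{x}[m+k+1]$ by Step 1. Writing $n=m+k\ge m$ this is exactly the entire-solution identity, so $\mathbf{x}$ is an entire solution for $\mathbf{u}$.

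\textbf{Step 3 (uniqueness).} A one-line induction from \eqref{eq:skewprod1} and the $\mu$-contraction hypothesis shows $\|\Phi(k,\mathbf{v},a)-\Phi(k,\mathbf{v},b)\|\le\mu^{k}\|a-b\|$ for all $k\in\mathbb{N}_0$, $\mathbf{v}\in\mathcal{U}$, $a,b\in X$. Now let $\mathbf{y}$ be any entire solution for $\mathbf{u}$ and fix $n\in\mathbb{Z}$. For every $m\le n$, applying the entire-solution identity to both $\mathbf{x}$ (Step 2) and $\mathbf{y}$ gives $\|\mathbf{x}[n]-\mathbf{y}[n]\|=\|\Phi(n-m,\theta^{m}(\mathbf{u}),\mathbf{x}[m])-\Phi(n-m,\theta^{m}(\mathbf{u}),\mathbf{y}[m])\|\le\mu^{n-m}\|\mathbf{x}[m]-\mathbf{y}[m]\|\le\mu^{n-m}\,\mathrm{diam}(X)$, with $\mathrm{diam}(X)<\infty$ because $X$ is compact. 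Letting $m\to-\infty$ and using $\mu<1$ forces $\mathbf{x}[n]=\mathbf{y}[n]$, and since $n$ was arbitrary, $\mathbf{y}=\mathbf{x}$. The only genuinely non-formal ingredient in the whole argument is this pullback limit $m\to-\infty$, which is precisely where the strict contraction $\mu<1$ and the boundedness of the compact state space $X$ are needed; everything else is bookkeeping with $\theta^{n}$ and the recursive definition of $\Phi$.
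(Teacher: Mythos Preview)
Your proof is correct. Both arguments establish the entire-solution identity by induction, but via different routes. The paper inducts on $k$ in $\mathbf{x}[n]=\Phi(k,\theta^{n-k}(\mathbf{u}),\mathbf{x}[n-k])$, invoking the cocycle property \eqref{eq:cocycle_property} to split $\Phi(k+1,\cdot)$ as $\Phi(1,\cdot,\Phi(k,\cdot))$ and then introducing an auxiliary shifted input $\mathbf{v}=\theta^{-1}(\mathbf{u})$ together with its associated sequence $\mathbf{z}$ in order to close the step. You instead isolate the one-step relation $\mathbf{x}[n+1]=g(\mathbf{u}[n+1],\mathbf{x}[n])$ directly from the fixed-point equation for $H$, after which the induction on $k$ collapses to a single application of the recursive definition \eqref{eq:skewprod1} plus that one-step identity, with no auxiliary sequences needed. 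This is shorter and more transparent. In addition, you prove uniqueness explicitly via the pullback contraction estimate $\mu^{n-m}\,\mathrm{diam}(X)\to 0$ as $m\to-\infty$; the paper's written proof of this theorem stops after showing $\mathbf{x}$ is an entire solution and leaves uniqueness to the surrounding discussion (contraction $\Rightarrow$ ESP) and the characterisation \eqref{eq:robinson}. So your version is also more self-contained.
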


\begin{proof}
    Fix $\mathbf{u} \in \mathcal{U}$ and let $n, m \in \mathbb{Z}, n \ge m$. We note that 
    \begin{align*}
        H(\mathbf{u}) = \Phi(1, \mathbf{u}, H(\theta^{-1}(\mathbf{u})))
    \end{align*}
    by Theorem \ref{thm:the_map_H}. We first show that for any $k \in \mathbb{N}_{0}$, 
\begin{align} \label{eq:H_entire_sln}
    \mathbf{x}[n] = \Phi(k, \theta^{n-k}(\mathbf{u}), \mathbf{x}[n-k]).
\end{align}
    We argue by induction on $k$. The case $k = 0$ follows from the definition of a cocycle. For $k = 1$, 
    \begin{align*}
        \Phi(&1, \theta^{n-1}(\mathbf{u}), \mathbf{x}[n-1]) \\
        &=\Phi(1, \theta^{n-1}(\mathbf{u}), H(\theta^{n-2}(\mathbf{u}))), \\
        &= H(\theta^{n-1}(\mathbf{u})) = \mathbf{x}[n].
    \end{align*}
    Assume (\ref{eq:H_entire_sln}) holds for some $k \ge 0$. Let $\mathbf{z} = \{H(\theta^{n-1}(\mathbf{v}))\}_{n \in \mathbb{Z}}$ be the entire solution associated to $\mathbf{v} = \theta^{-1}(\mathbf{u})$. Then
    \begin{align*}
        \Phi(k+1, \:&\theta^{n-(k+1)}(\mathbf{u}), \mathbf{x}[n-(k+1)]) \\
         &= \Phi(1, \theta^{n-1}(\mathbf{u}), \Phi(k, \theta^{n-(k+1)}(\mathbf{u}), \mathbf{x}[n-(k+1)]) \quad \text{by the cocycle property } (\ref{eq:cocycle_property}),\:  \\
         &= \Phi(1, \theta^{n-1}(\mathbf{u}), \Phi(k, \theta^{n-(k+1)}(\mathbf{u}), H(\theta^{n-1 - (k+1)}(\mathbf{u})), \\
         &= \Phi(1, \theta^{n-1}(\mathbf{u}), \Phi(k, \theta^{n-k}(\mathbf{v}), H(\theta^{n- k-1}(\mathbf{v})))), \\
         &= \Phi(1, \theta^{n-1}(\mathbf{u}),\Phi(k, \theta^{n-k}(\mathbf{v}), \mathbf{z}[n-k])), \\
         &= \Phi(1, \theta^{n}(\mathbf{v}), \mathbf{z}[n])\quad \text{by the inductive hypothesis,} \\
         &= \mathbf{z}[n+1], \\ 
         &= H(\theta^{n}(\mathbf{v})) = H(\theta^{n-1}(\mathbf{u})) = \mathbf{x}[n].
    \end{align*}
    Choosing $k = n - m$, we have that 
    \begin{align*}
        \mathbf{x}[n] = \Phi(n-m, \theta^m(\mathbf{u}), \mathbf{x}[m]).
    \end{align*}    

\end{proof}
\section{Dimension of the pullback attractor} \label{sec:dim_pull}
It is usually insightful to study properties of the skew product flow $(\theta, \Phi)$ in relation to the corresponding property of the driving input sequence or the space of input sequences. For example, it has been recently shown that the echo index of an input-sequence, a generalisation of ESP, can be related to the minimal number of repetitions in the input sequence \cite{Ceni2020echo, ceni2023transitions}. It has also been shown that in training a reservoir computer to reconstruct the Lyapunov spectrum of a dynamical system, the most negative conditional Lyapunov exponent of the reservoir dynamics should be more negative than the most negative Lyapunov exponent of the driving dynamical system \cite{hart2024attractor}. It was shown in \cite{hart2020embedding} that a related condition is sufficient for the GS to be continuously differentiable. RCs have also been shown to display multifunctionality when trained on data from separate attractors \cite{flynn2021multifunctionality, pascanu2013difficulty}. In this section, we link the dimension of the pullback attractor of $(\theta, \Phi)$ to the dimension of the space of input sequences. In particular, we establish upper and lower bounds for the dimension of the pullback attractor of $(\theta, \Phi)$ based on the dimension of the state input sequences $\mathcal{U}$ and its subsets, with a particular focus on subsets of the type $\mathcal{V}_{(U,\phi)}$. 
\\
\\
We will use the box-counting dimension as it provides an upper bound for the other notions of dimension such as Hausdorff dimension \cite{robinson2010embedding}.
\begin{definition}[Box-counting dimension] \label{def:box_dim}
Let $(Z, d)$ be a compact metric space. For any $\varepsilon > 0$, let $N(Z, \varepsilon)$ be the minimum number of balls of radius $\varepsilon$ needed to cover $Z$ with centres in $Z$. Note that $N(Z, \varepsilon)$ is finite for each $\varepsilon > 0$ since $Z$ is compact. The (upper) box-counting dimension is defined as 
\begin{align*}
    \text{\normalfont dim}_B Z = \lim \sup_{\varepsilon \to 0} \frac{\log_2 N(Z, \varepsilon)}{-\log_2 \varepsilon}.
\end{align*}  
\end{definition}

The box-counting dimension satisfies the following properties
\begin{enumerate} 
    \item box-counting dimension of a singleton is $0$, 
    \item the box-counting dimension of an open subset of an $n$-dimensional Euclidean space is $n$. 
    \item if $(X, d_X)$ and $(Z, d_Z)$ are metric spaces and $f:X \to Z$ is a Lipschitz continuous function with 
    \begin{align*}
     d_Z(f(x_2), f(x_1)) \le C d_2(x_2, x_1),
    \end{align*}
    then $\dim_B f(X) \le \dim_B X$. In particular, if $f:X \to Z$ is bi-Lipschitz, then $\dim_B f(X) = \dim_B X$.
\end{enumerate}

\subsection{Upper bound for finite dimensional input sequence spaces} \label{ssec:finite_dim_input_seqs}
There are various results on upper bounds for the dimension of the pullback attractor that can be applied to finite dimensional input sequence spaces \cite{cunha2024smoothing, carvalho2025finite}. Lower bounds, in general, require specific information about the the driving dynamical system such as the existence of a ``physical measure" \cite{young1981capacity} or the existence of saddle-like fixed points \cite{chepyzhov2002attractors}, the dimension of whose unstable manifold provides the lower bound.  For our upper bound, we will use the result of Vishik and Chepyzhov\cite[Chapter IX, Theorem 1.2]{chepyzhov2002attractors}.  Let $\mathcal{V} \subseteq \mathcal{U}$ and 
\begin{align} \label{def:Hcal_V}
    \mathcal{H}_{\mathcal{V}} = \bigcup_{\mathbf{u} \in \mathcal{V}} \mathbf{A}_\mathbf{u} \subseteq \mathbb{R}^{N_r}
\end{align}
be all points on the pullback attractor of the skew product flow $(\theta, \Phi)$ with input sequences in $\mathcal{V}$. We refer to the set $\mathcal{H}_\mathcal{V}$ as the pullback attractor corresponding to input sequences in $\mathcal{V}$. We start with the statement of the result of Vishik and Chepyzhov and then prove that our skew product flow $(\theta, \Phi)$ satisfies its assumptions if $g$ is continously differentiable.

\begin{theorem} [Vishik and Chepyzhov \cite{chepyzhov2002attractors}] \label{thm:vishikchepyzhov}
 Let $\mathcal{V}$ be a finite-dimensional subset of $(\mathcal{U}, d^w)$. Suppose the skew product flow $(\theta, \Phi)$ satisfies the following assumptions. 
 \begin{enumerate}[label=\normalfont \textbf{VC\arabic*}]
    \item \textbf{Finite time Lipschitz continuous} on $\mathcal{V}$ $i.e$ there exist a function $C:\mathbb{N} \to \mathbb{R}_{+}$ such that for all $x \in X$ \label{assume:unif_lipschitz}
    \begin{align*}
        ||\Phi(n, \mathbf{u}, x) -  \Phi(n, \mathbf{v}, x)|| \le C(n) d^w(\mathbf{u}_{[0, n]}, \mathbf{v}_{[0, n]}) 
    \end{align*}
    where
    $\mathbf{u}_{[0, n]}$ is the projection of $\mathbf{u}$ onto $U^{n+1}$ \textit{i.e.} $\mathbf{u}_{[0, n]}[i] = \mathbf{u}[i]$ for $0 \le i \le n$ and $\mathbf{u}_{[0, n]}[i] = 0 \in \mathbb{R}^{N_\text{in}}$ otherwise. 
    \item \textbf{Uniform quasidifferentiability} on $\mathcal{V}$: the skew product flow $(\theta, \Phi)$ is uniformly quasidifferentiable on $\mathcal{V}$ if there exist a parameterised family of bounded linear maps $\{L\Phi(n, \mathbf{u}, x)\}_{\mathbf{u} \in \mathcal{V}, n \in \mathbb{N}_0, x \in X}$ from $\mathbb{R}^{N_r}$ to $\mathbb{R}^{N_r}$ such that \label{assume:unif_q_d}
     \begin{align*}
    ||\Phi(n, \mathbf{u}, x) - \Phi(n, \mathbf{u}, z) - L \Phi(n, \mathbf{u}, x)(x - z)|| \le \gamma(n, x-z) ||x - z|| 
     \end{align*}
    where $\mathbf{u} \in \mathcal{V}$, $x, z \in X$ and $\gamma(n, s) \to 0$ as $||s|| \to 0$ for all $n \in \mathbb{N} \cup \{0\}$. The function $\gamma(n, s)$ is independent of $x, z \in X$ and $\mathbf{u} \in \mathcal{V}$. 
 \end{enumerate}
   Then
\begin{align*}
    \text{ \normalfont dim}_B \: \mathcal{H}_{\mathcal{V}} \le \sup_{\mathbf{u} \in \mathcal{V}} \text{\normalfont dim}_B \: \mathbf{A}_\mathbf{u} + \text{\normalfont dim}_B \: \mathcal{V}.
\end{align*}  
\end{theorem}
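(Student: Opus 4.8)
The plan is to follow the volume-element method of Constantin--Foias--Temam as adapted to skew-product systems by Vishik and Chepyzhov, so I largely expect to \emph{reconstruct the skeleton and invoke} \cite[Chapter IX, Theorem 1.2]{chepyzhov2002attractors} for the quantitative core rather than build everything from scratch. The first move is to pass from $\mathcal{H}_\mathcal{V}\subseteq\mathbb{R}^{N_r}$ to the ``total'' nonautonomous attractor $\mathbf{A}:=\{(\mathbf{u},x):\mathbf{u}\in\mathcal{V},\ x\in\mathbf{A}_\mathbf{u}\}\subseteq\mathcal{V}\times X$, which is compact because the fibre map $\mathbf{u}\mapsto\mathbf{A}_\mathbf{u}$ is upper-semicontinuous and $\mathcal{V}$, $X$ are compact. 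Since $\mathcal{H}_\mathcal{V}=\pi_X(\mathbf{A})$ with $\pi_X$ $1$-Lipschitz from $(\mathcal{V}\times X, d^w(\cdot,\cdot)+\|\cdot-\cdot\|)$ to $\mathbb{R}^{N_r}$, the Lipschitz property of the box-counting dimension (the third property listed above) gives $\dim_B\mathcal{H}_\mathcal{V}\le\dim_B\mathbf{A}$, so it suffices to cover $\mathbf{A}$ efficiently. The point is that $\mathbf{A}$ fibres over $\mathcal{V}$ with fibres $\mathbf{A}_\mathbf{u}$ of box-dimension at most $\sup_{\mathbf{u}\in\mathcal{V}}\dim_B\mathbf{A}_\mathbf{u}$; the naive estimate $\dim_B(\mathcal{V}\times X)$ is useless, and a fibred set can have box-dimension strictly above (base) $+$ (fibre) dimension --- the Weierstrass-graph phenomenon --- so the hypotheses \ref{assume:unif_lipschitz}--\ref{assume:unif_q_d} must be used precisely to rule this out.

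Next I set up the covering. Fix $\delta>0$ and a minimal $\delta$-net $\{\mathbf{u}_1,\dots,\mathbf{u}_M\}$ of $(\mathcal{V},d^w)$, $M=N(\mathcal{V},\delta)\le\delta^{-\dim_B\mathcal{V}-o(1)}$. I want a uniform ``thin-tube'' estimate: there are numbers $r=r(n,\delta)$, tending to $0$ as $\delta\to0$ and $n\to\infty$ along a suitably coupled sequence, with
\begin{align*}
\bigcup_{\mathbf{v}\in B(\mathbf{u}_j,\delta)}\mathbf{A}_\mathbf{v}\ \subseteq\ \bigl\{x\in\mathbb{R}^{N_r}:\operatorname{dist}(x,\mathbf{A}_{\mathbf{u}_j})\le r\bigr\}.
\end{align*}
Granting this, each such union is covered by at most $c\,N(\mathbf{A}_{\mathbf{u}_j},r)\le c\,r^{-\sup_{\mathbf{u}}\dim_B\mathbf{A}_\mathbf{u}-o(1)}$ balls of radius $2r$ ($c$ depending only on $N_r$), whence $N(\mathcal{H}_\mathcal{V},2r)\le M\cdot c\,\sup_{\mathbf{u}\in\mathcal{V}}N(\mathbf{A}_\mathbf{u},r)$; taking logarithms, dividing by $-\log(2r)$, coupling $\delta$ to $n$ so that $\delta\to0$, $r\to0$ with $\log\delta\sim\log r$, and passing to the $\limsup$ yields $\dim_B\mathcal{H}_\mathcal{V}\le\dim_B\mathcal{V}+\sup_{\mathbf{u}\in\mathcal{V}}\dim_B\mathbf{A}_\mathbf{u}$.

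The thin-tube estimate is where \ref{assume:unif_lipschitz} and \ref{assume:unif_q_d} do the work, and it is the main obstacle. Using \eqref{eq:robinson}, every point of $\mathbf{A}_\mathbf{v}$ has the form $\Phi(n,\theta^{-n}\mathbf{v},y)$ with $y\in X$, so $\mathbf{A}_\mathbf{v}\subseteq\Phi(n,\theta^{-n}\mathbf{v},X)$. By \ref{assume:unif_lipschitz}, $\Phi(n,\theta^{-n}\mathbf{v},X)$ and $\Phi(n,\theta^{-n}\mathbf{u}_j,X)$ lie within $C(n)\kappa(n)\delta$ of each other in Hausdorff distance, where $\kappa(n)\le w_0/w_n$ absorbs the distortion of $d^w$ by the shift $\theta^{-n}$ on the coordinate block $[0,n]$; and by (uniform over compact $\mathcal{V}$) pullback attraction of $X$ --- which follows from the definition of the pullback attractor together with the equicontinuity supplied by \ref{assume:unif_lipschitz} --- $\Phi(n,\theta^{-n}\mathbf{u}_j,X)$ lies within some $\eta_n\to0$ of $\mathbf{A}_{\mathbf{u}_j}$. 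This already gives the inclusion with $r(n,\delta)=C(n)\kappa(n)\delta+\eta_n$, but only crudely: $C(n)\kappa(n)$ may grow faster than $\eta_n$ shrinks, and then the coupling $\log\delta\sim\log r$ with $r\to0$ cannot be arranged and the $\dim_B\mathcal{V}$ term gets inflated. The resolution --- the genuinely technical heart of the Vishik--Chepyzhov theorem --- is to replace the soft pullback bound by a quantitative squeezing estimate from \ref{assume:unif_q_d}: one tracks how $d$-dimensional volume elements evolve under the quasiderivatives $L\Phi(n,\mathbf{u},x)$, shows for $d$ slightly above $\sup_{\mathbf{u}}\dim_B\mathbf{A}_\mathbf{u}$ that $\Phi(n,\cdot,\cdot)$ compresses $X$ into an explicitly thin neighbourhood of the $d$-dimensional fibre, and balances this compression against the growth of $C(n)\kappa(n)$ to select admissible sequences $\delta=\delta(n)\to0$, $n\to\infty$. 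Once this bookkeeping is in place the estimate closes; since the following subsection verifies \ref{assume:unif_lipschitz}--\ref{assume:unif_q_d} for $C^1$ maps $g$, in the write-up I would simply cite \cite[Chapter IX, Theorem 1.2]{chepyzhov2002attractors} for the quantitative step, with the scheme above as the conceptual route.
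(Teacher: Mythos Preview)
The paper does not prove this theorem at all: it is stated as a result of Vishik and Chepyzhov with a direct citation to \cite[Chapter IX, Theorem 1.2]{chepyzhov2002attractors}, and the paper then moves immediately to the corollary and to verifying that the hypotheses \ref{assume:unif_lipschitz}--\ref{assume:unif_q_d} hold for $C^1$ maps $g$ (Lemma \ref{lmm:vishik_assumptions}). So there is nothing to compare against on the paper's side beyond the bare citation.

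Your proposal is therefore strictly more than what the paper does: you outline the conceptual scaffolding of the Vishik--Chepyzhov argument (pass to the total attractor, cover the base by a $\delta$-net, control each tube by a thin neighbourhood of a single fibre, then balance the Lipschitz growth against the volume contraction coming from the quasiderivative) before, like the paper, invoking \cite{chepyzhov2002attractors} for the quantitative core. The skeleton you give is faithful to the original argument, and you are right to flag that the crude bound $r(n,\delta)=C(n)\kappa(n)\delta+\eta_n$ is insufficient without the volume-element squeezing --- this is exactly why \ref{assume:unif_q_d} is there and not merely a Lipschitz condition in $x$. One small caveat: the uniformity of the fibre-dimension estimate $N(\mathbf{A}_{\mathbf{u}_j},r)\le r^{-\sup_\mathbf{u}\dim_B\mathbf{A}_\mathbf{u}-o(1)}$ across $j$ is not automatic from the pointwise definition of box-dimension, and in Vishik--Chepyzhov this uniformity is again extracted from the quasidifferentiability hypothesis rather than assumed; your sketch implicitly folds this into the citation, which is fine, but it is worth being aware that this is a second place where \ref{assume:unif_q_d} is genuinely used.
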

If the skew product flow satisfies the echo state property, the following is a simple corollary of the above result and the fact that the box-counting dimension of a singleton is $0$. 
\begin{corollary} \label{cor:dim_BV}
   Suppose $\mathcal{V}$ is a finite dimensional subset of $\mathcal{U}$, and that the skew product flow $(\theta, \Phi)$ satisfies the ESP, then 
   \begin{align*}
       \text{ \normalfont dim}_B \: \mathcal{H}_{\mathcal{V}} \le \text{\normalfont dim}_B \: \mathcal{V}.
   \end{align*}
\end{corollary}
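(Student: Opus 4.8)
The plan is to apply Theorem \ref{thm:vishikchepyzhov} verbatim and observe that the echo state property collapses the first term of its bound. So essentially the only thing to check is that under the ESP every fibre of the pullback attractor is a single point, hence has box-counting dimension zero.

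Concretely, I would first note that $U \times X$ is compact and $g$ is continuous, so $g(U, X)$ is bounded and the characterisation \eqref{eq:robinson} is available: for every $\mathbf{u} \in \mathcal{U}$,
\begin{align*}
    \mathbf{A}_\mathbf{u} = \{\mathbf{x}[0] : \mathbf{x} \text{ is an entire solution associated to } \mathbf{u}\}.
\end{align*}
Under the ESP there is exactly one entire solution associated to each $\mathbf{u}$, so each $\mathbf{A}_\mathbf{u}$ is a singleton. Since the box-counting dimension of a singleton is $0$, we get $\dim_B \mathbf{A}_\mathbf{u} = 0$ for every $\mathbf{u} \in \mathcal{V}$, and therefore $\sup_{\mathbf{u} \in \mathcal{V}} \dim_B \mathbf{A}_\mathbf{u} = 0$. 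Feeding this into Theorem \ref{thm:vishikchepyzhov} yields
\begin{align*}
    \dim_B \mathcal{H}_\mathcal{V} \le \sup_{\mathbf{u} \in \mathcal{V}} \dim_B \mathbf{A}_\mathbf{u} + \dim_B \mathcal{V} = 0 + \dim_B \mathcal{V} = \dim_B \mathcal{V},
\end{align*}
which is the claim.

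There is no genuine obstacle here — the result is a bona fide corollary. The one point that deserves a word of care is that the inequality of Theorem \ref{thm:vishikchepyzhov} already presupposes the structural hypotheses \ref{assume:unif_lipschitz} (finite-time Lipschitz continuity) and \ref{assume:unif_q_d} (uniform quasidifferentiability) on $(\theta,\Phi)$, so these must be assumed to hold here as well; in practice the corollary is meant to be read together with the $C^1$ condition on $g$ established in Section \ref{ssec:finite_dim_input_seqs}, which guarantees exactly these hypotheses. With that understood, the argument above is complete.
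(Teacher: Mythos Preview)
Your argument is correct and matches the paper's own justification: the corollary follows directly from Theorem \ref{thm:vishikchepyzhov} together with the observation that under the ESP each fibre $\mathbf{A}_\mathbf{u}$ is a singleton and hence has box-counting dimension zero. Your additional remark that the hypotheses \ref{assume:unif_lipschitz} and \ref{assume:unif_q_d} must be in force is a fair caveat, and indeed the paper applies the corollary only after verifying these via Lemma \ref{lmm:vishik_assumptions}.
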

\begin{remark}
    The upper bound in Corollary \ref{cor:dim_BV} would also apply to the case when $\mathbf{A}_\mathbf{u}$ is finite for all $\mathbf{u} \in \mathcal{V}$. Alternatively, the dimension of $\mathbf{A}_\mathbf{u}$ can be bounded above by the singular values of the matrix of the linear map $L\Phi(n, \mathbf{u}, x)$ \cite{chepyzhov2002attractors, temam2012infinite}. In particular, suppose $M(n, \mathbf{u}, x)$ is the matrix of the linear map $L\Phi(n, \mathbf{u}, x)$ and  $\alpha_1(n, \mathbf{u}, x) \ge \alpha_2(\mathbf{u}, n, x) \ldots \ge \alpha_{N_r}(\mathbf{u}, n, x) \ge 0$ are its singular values \textit{i.e.} the square root of the eigenvalues of $M^T M$. Let the numbers $\lambda_j(\mathbf{u})$, $j = 1, \ldots, N_r,$ be defined as     
    \begin{align} \label{eq:uniform_lyapunov_exponent}
        \lambda_j(\mathbf{u}) =  \sup_{x \in X}  \limsup_{n \to \infty} \frac{1}{n} \log \alpha_{j}(\mathbf{u}, n, x).
    \end{align}
    If $m(\mathbf{u}) \in \mathbb{N}_0$ is the smallest integer such that $\sum_{i = 1}^{m(\mathbf{u}) + 1} \lambda_j(\mathbf{u}) < 0$ and $\lambda_2(\mathbf{u}) \ge \lambda_3(\mathbf{u}) \ge \ldots \ge \lambda_{N_r}(\mathbf{u})$, then 
\begin{align*}
        \dim_B \mathbf{A}_{\mathbf{u}} \le m(\mathbf{u}) + \frac{\sum_{i = 1}^{m(u)}  \lambda_j(\mathbf{u})}{- \lambda_{m(\mathbf{u})+1}(\mathbf{u})}.
\end{align*} 
    If the $\limsup$ in \eqref{eq:uniform_lyapunov_exponent} can be replaced with a limit, the numbers 
    \begin{align*}
        \lambda_j(\mathbf{u}, x) = \lim_{n \to \infty} \frac{1}{n} \log \alpha_{j}(\mathbf{u}, n, x), \: j = 1, \ldots, N_r,
    \end{align*}
    are the so-called conditional Lyapunov exponents of $(\theta, \Phi)$ when driven with $\mathbf{u}$ \cite{temam2012infinite, pecora1991driving}. 
\end{remark}
We will now show that assumption \ref{assume:unif_q_d} and \ref{assume:unif_lipschitz} are satisfied by the skew product flow $(\theta, \Phi)$ if $g$ is continuously differentiable on $U \times X$ and there exist $\mu, \eta > 0$ such that

\begin{align} \label{cond:lipschitz}
    ||g(u, x) - g(v, z)|| \le \eta || u - v || + \mu ||x - z||, \: \text{ for all }\: x, y \in X \: \text{and}\: u, v \in U.
\end{align}


\begin{lemma} \label{lmm:vishik_assumptions}
Suppose $g = (g^1, \ldots, g^{N_r})$ is continuously differentiable on $U \times X$ and satisfies condition \eqref{cond:lipschitz}. Then skew product flow $(\theta, \Phi)$ satisfies assumption \ref{assume:unif_lipschitz} and assumption \ref{assume:unif_q_d}. 
\end{lemma}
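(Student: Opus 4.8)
The plan is to establish \ref{assume:unif_lipschitz} and \ref{assume:unif_q_d} separately, in both cases by induction on the horizon $n$ and using that $\Phi(n,\mathbf{u},\cdot)$ is the composition $g(\mathbf{u}[n],\cdot)\circ\cdots\circ g(\mathbf{u}[1],\cdot)$. Two pointwise consequences of \eqref{cond:lipschitz} will be used repeatedly: taking $u=v$ shows $g(u,\cdot)$ is $\mu$-Lipschitz for every $u\in U$, whence $\|\Phi(n,\mathbf{u},x)-\Phi(n,\mathbf{u},z)\|\le\mu^n\|x-z\|$; and, since $D_2 g(u,y)$ is then the derivative of a $\mu$-Lipschitz map, $\|D_2 g(u,y)\|_2\le\mu$ for all $(u,y)\in U\times X$.

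For \ref{assume:unif_lipschitz} I would induct on $n$ from the trivial case $n=0$. Because $\mathbf{u}_{[0,n]}$ and $\mathbf{v}_{[0,n]}$ both vanish outside $\{0,\dots,n\}$, one has $d^w(\mathbf{u}_{[0,n]},\mathbf{v}_{[0,n]})=\max_{0\le i\le n}w_i\|\mathbf{u}[i]-\mathbf{v}[i]\|$, which is nondecreasing in $n$ and, at horizon $n+1$, dominates $w_{n+1}\|\mathbf{u}[n+1]-\mathbf{v}[n+1]\|$. Writing $\Phi(n+1,\mathbf{u},x)=g(\mathbf{u}[n+1],\Phi(n,\mathbf{u},x))$ and invoking \eqref{cond:lipschitz},
\[
\|\Phi(n+1,\mathbf{u},x)-\Phi(n+1,\mathbf{v},x)\|\le\eta\|\mathbf{u}[n+1]-\mathbf{v}[n+1]\|+\mu\,C(n)\,d^w(\mathbf{u}_{[0,n]},\mathbf{v}_{[0,n]}),
\]
so the estimates above give \ref{assume:unif_lipschitz} with $C(n+1):=\eta/w_{n+1}+\mu C(n)$, $C(0):=0$, i.e.\ $C(n)=\eta\sum_{k=1}^{n}\mu^{n-k}/w_k$. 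This step uses only \eqref{cond:lipschitz}, not differentiability.

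For \ref{assume:unif_q_d} I would take $L\Phi(n,\mathbf{u},x)$ to be the honest derivative of $x\mapsto\Phi(n,\mathbf{u},x)$ — which exists and equals $D_2 g(\mathbf{u}[n],\Phi(n-1,\mathbf{u},x))\cdots D_2 g(\mathbf{u}[1],x)$ (the identity for $n=0$), since a composition of $C^1$ maps is $C^1$ — so that, each factor having $2$-norm $\le\mu$, $L\Phi(n,\mathbf{u},x)$ is a bounded linear map. The first sub-step is that a single $g(u,\cdot)$ is uniformly quasidifferentiable, uniformly in $u$: writing $g(u,x)-g(u,z)-D_2 g(u,x)(x-z)=\int_0^1[D_2 g(u,z+t(x-z))-D_2 g(u,x)](x-z)\,dt$ and using that $D_2 g$ is uniformly continuous on the compact set $U\times X$,
\[
\|g(u,x)-g(u,z)-D_2 g(u,x)(x-z)\|\le\omega(\|x-z\|)\,\|x-z\|,
\]
where $\omega$ is the (nondecreasing) modulus of continuity of $D_2 g$ on $U\times X$, so $\omega(r)\to0$ as $r\to0^+$, independently of $u,x,z$. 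The second sub-step is the induction: with $G=g(\mathbf{u}[n+1],\cdot)$, $\Psi=\Phi(n,\mathbf{u},\cdot)$, decompose
\[
(G\circ\Psi)(x)-(G\circ\Psi)(z)-DG(\Psi(x))D\Psi(x)(x-z)=R_1+R_2,
\]
with $R_1=G(\Psi(x))-G(\Psi(z))-DG(\Psi(x))(\Psi(x)-\Psi(z))$ and $R_2=DG(\Psi(x))\big(\Psi(x)-\Psi(z)-D\Psi(x)(x-z)\big)$. Then $\|R_1\|\le\omega(\|\Psi(x)-\Psi(z)\|)\|\Psi(x)-\Psi(z)\|\le\mu^n\,\omega(\mu^n\|x-z\|)\|x-z\|$ by the first sub-step and the $\mu^n$-Lipschitz bound, while $\|R_2\|\le\mu\,\gamma(n,x-z)\|x-z\|$ by $\|DG\|_2\le\mu$ and the inductive hypothesis. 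Hence \ref{assume:unif_q_d} holds with $\gamma(n+1,s):=\mu^n\omega(\mu^n\|s\|)+\mu\,\gamma(n,s)$, $\gamma(0,s):=0$ — a function of $n$ and $s$ only, tending to $0$ as $\|s\|\to0$; its independence of $\mathbf{u}\in\mathcal{V}$ is built in because $\omega$ is a supremum over all of $U$.

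The main obstacle is the \ref{assume:unif_q_d} half: extracting from ``$\Phi(n,\mathbf{u},\cdot)$ is $C^1$'' a quasidifferentiability modulus $\gamma(n,s)$ that is genuinely uniform in the base point $x$, in $z$, and in $\mathbf{u}$ simultaneously; the composition-plus-uniform-continuity argument above is what makes all three uniformities fall out at once. One minor caveat: the integral formula for $R_1$ requires the segment $\{z+t(x-z):t\in[0,1]\}$ to lie where $g(u,\cdot)$ is $C^1$. This is automatic when $X$ is convex (as for the ESN state space $[-1,1]^{N_r}$); in general one takes a $C^1$ extension of $g$ to an open neighbourhood of $U\times X$ and restricts to increments small enough that the segment stays inside it, which is harmless since $\gamma(n,\cdot)$ only needs controlling near $0$.
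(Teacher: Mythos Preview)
Your proof is correct and, for \ref{assume:unif_lipschitz}, essentially identical to the paper's: both unfold the recursion $\Phi(n,\mathbf{u},x)=g(\mathbf{u}[n],\Phi(n-1,\mathbf{u},x))$ using \eqref{cond:lipschitz} to arrive at $C(n)=\eta\sum_{k}\mu^{n-k}/w_k$ (the paper writes this out as a telescoping chain rather than a formal induction, but the content is the same).

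For \ref{assume:unif_q_d} the two arguments diverge in style. The paper observes that $\Phi(n,\cdot,\cdot)$ factors through a $C^1$ map $F:U^n\times X\to X$, takes $L\Phi(n,\mathbf{u},x)=D_2F(\underline{\mathbf{u}},x)$, and simply \emph{defines}
\[
\gamma(n,\xi)=\sup_{\underline{\mathbf{u}}\in U^n}\ \sup_{0<\|x-z\|\le\xi}\frac{\|F(\underline{\mathbf{u}},x)-F(\underline{\mathbf{u}},z)-D_2F(\underline{\mathbf{u}},x)(x-z)\|}{\|x-z\|},
\]
appealing to compactness of $U^n\times X$ to make this finite (and, implicitly, to uniform continuity of $D_2F$ for $\gamma(n,\xi)\to0$). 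Your route is more constructive: you isolate the one-step modulus $\omega$ of $D_2g$ via the integral remainder, then propagate it through the composition by the $R_1+R_2$ splitting to obtain the explicit recursion $\gamma(n+1,s)=\mu^n\omega(\mu^n\|s\|)+\mu\,\gamma(n,s)$. What the paper's approach buys is brevity; what yours buys is an explicit formula for $\gamma$ and a transparent reason why it is uniform in $\mathbf{u}$, $x$, $z$ and vanishes at the origin---points the paper leaves to the reader. Your caveat about convexity of $X$ (or passing to a $C^1$ extension on a neighbourhood) is well taken and applies equally to the paper's argument, which also tacitly needs the mean-value inequality to make the sup definition of $\gamma$ go to zero.
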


\begin{proof}
Starting with assumption \ref{assume:unif_lipschitz}, it follows by the cocycle property and assumption \eqref{cond:lipschitz} that
    \begin{align*}
        ||\Phi(n, \mathbf{u}, x) - \Phi(n, \mathbf{v}, x)|| &\le \eta ||\mathbf{u}[n]- \mathbf{v}[n]|| +  \mu ||\Phi(n-1, \mathbf{u}, x) - \Phi(n-1, \mathbf{v}, x)||, \\
        &\le \eta ||\mathbf{u}[n] - \mathbf{v}[n]|| +  \mu\eta ||\mathbf{u}[n-1] - \mathbf{v}[n-1]|| +  \mu^2 ||\Phi(n-2, \mathbf{u}, x) - \Phi(n-2, \mathbf{v}, x)||, \\
        &\quad \vdots \\
        &\le \eta \sum_{i = 0}^{n-1} \mu^{i} ||\mathbf{u}[n-i ] - \mathbf{v}[n-i] || \: \text{ since} \: \Phi(0, \mathbf{u}, x) = \Phi(0, \mathbf{v}, x) = x,\\
        &\le \eta \left ( \sum_{i = 0}^n \frac{\mu^{n-i}}{w_i} \right) \max_{j = 0, \ldots n} w_j || \mathbf{u}[j] - \mathbf{v}[j]||, \\
        &= C(n) d^w(\mathbf{u}_{[0, n]}, \mathbf{v}_{[0, n]}).
    \end{align*}
As for assumption \ref{assume:unif_q_d}, we note that there is nothing to prove for the case $n = 0$ since $\Phi(0, \mathbf{u}, x) = x$. Now, for each $n \in \mathbb{N}$, the map $\Phi(n, \cdot, \cdot): \mathcal{U} \times X \to X$ is equivalent to the map $F: U^n \times X \to X$ where 
    \begin{align*}
        F(  \underline{\mathbf{u}}, x) = g(\mathbf{u}[n], g(\mathbf{u}[n-1], ..., g(\mathbf{u}[1], x), \ldots)) = \Phi(n, \mathbf{u}, x),
    \end{align*}
    $\mathbf{u} \in \mathcal{U}$, $x \in X$ and $\underline{\mathbf{u}} = (\mathbf{u}[1], \ldots, \mathbf{u}[n])$. Since $g$ is continuously differentiable on $U \times X$, $F$ is also continuously differentiable on $U^n \times X$ by the chain rule. We will take the linear map $L\Phi(n, \mathbf{u}, x)$ to be $D_2 F(\underline{\mathbf{u}}, x)$, the partial derivative of $F$ with respect to $x$. The function $\gamma(n, \xi)$ can be defined as 
    \begin{align*}
        \gamma(n, \xi) = \sup_{\underline{\mathbf{u}} \in U^n} \sup_{0 < ||x - z|| \le \xi} \frac{||F(\underline{\mathbf{u}}, x) - F(\underline{\mathbf{u}}, z) -  D_2 F(\underline{\mathbf{u}}, x)(x - z)||}{||x - z||}.
    \end{align*}
    Note that $\gamma$ is well defined since both $U^n$ and $X$ are compact.
    \\
    \\
\end{proof}
\begin{remark}
    If, in addition to $g$ being continuously differentiable, both $X$ and $U$ are convex, then the Lipschitz condition \eqref{cond:lipschitz} holds.
\end{remark}
Theorem \ref{thm:vishikchepyzhov} can be applied to the input sequence space $\mathcal{V}_{(U, \phi)}$ to get an upper bound on the box-counting dimension of $\mathcal{H}_{\mathcal{V}_{(U, \phi)}}$. This result is based on the fact that if $\phi$ is bi-Lipschitz, then there exists a bi-Lipschitz embedding of $U$ into $\mathcal{V}_{(U, \phi)}$.


\begin{theorem} \label{prop:dyn_euclidean_prod}
    Suppose there exist $C > 1$ such that for all $u, v \in U$
    \begin{align*}
        ||\phi(u) - \phi(v)|| \le C||u - v||\: \text{ and } \:||\phi^{-1}(u) - \phi^{-1}(v)|| \le C||u - v||.
    \end{align*}
    Assume $(\theta, \Phi)$ satisfy the ESP and $g$ satisfies the assumptions of Lemma \ref{lmm:vishik_assumptions}, then 
    \begin{align*}
        \dim_B \mathcal{H}_{\mathcal{V}_{(U, \phi)}} \le \dim_B U
    \end{align*}
\end{theorem}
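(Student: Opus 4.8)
The plan is to combine Theorem~\ref{thm:vishikchepyzhov} (in its ESP form, Corollary~\ref{cor:dim_BV}) with an explicit bi-Lipschitz identification of $U$ with $\mathcal{V}_{(U,\phi)}$ equipped with the product metric $d^w$. First I would verify that $\mathcal{V}_{(U,\phi)}$ is a closed (hence compact) subset of $\mathcal{U}$ and that the orbit map $\iota: U \to \mathcal{V}_{(U,\phi)}$ given by $\iota(u) = (\ldots,\phi^{-1}(u),u,\phi(u),\ldots)$ is a continuous bijection; since $U$ is compact and $\mathcal{U}$ Hausdorff, $\iota$ is automatically a homeomorphism onto its image. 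The key quantitative step is to show $\iota$ is bi-Lipschitz. Using the bi-Lipschitz bound on $\phi$, one gets $\|\phi^{n}(u) - \phi^{n}(v)\| \le C^{|n|}\|u-v\|$ for every $n \in \mathbb{Z}$, hence
\begin{align*}
    d^w(\iota(u),\iota(v)) = \sup_{n \in \mathbb{Z}} w_{|n|}\,\|\phi^{n}(u)-\phi^{n}(v)\| \le \Big(\sup_{n\in\mathbb{Z}} w_{|n|} C^{|n|}\Big)\,\|u-v\|,
\end{align*}
which is a genuine Lipschitz bound provided the weighting sequence $w$ decays fast enough that $w_{|n|}C^{|n|}$ is bounded; this is where I would either invoke the freedom to choose $w$ (the product topology, and hence $\dim_B$, is independent of $w$ by \cite[Theorem 2.6]{grigoryeva2018echo}) or pick $w_k = (2C)^{-k}$ outright. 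The lower bound is immediate: $d^w(\iota(u),\iota(v)) \ge w_0\|u-v\|$ by looking only at the $n=0$ coordinate, so $\iota$ is bi-Lipschitz.

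Given the bi-Lipschitz embedding, property~3 of the box-counting dimension (the bi-Lipschitz invariance stated after Definition~\ref{def:box_dim}) yields $\dim_B \mathcal{V}_{(U,\phi)} = \dim_B U$. Next I would check that Corollary~\ref{cor:dim_BV} applies to $\mathcal{V} = \mathcal{V}_{(U,\phi)}$: this requires that $\mathcal{V}_{(U,\phi)}$ be a \emph{finite-dimensional} subset of $\mathcal{U}$, which we have just established since $\dim_B U < \infty$ ($U \subseteq \mathbb{R}^{N_\text{in}}$ is bounded), that $(\theta,\Phi)$ satisfy the ESP — assumed — and that the hypotheses of Theorem~\ref{thm:vishikchepyzhov} hold, which follow from Lemma~\ref{lmm:vishik_assumptions} since $g$ satisfies its assumptions. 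Combining,
\begin{align*}
    \dim_B \mathcal{H}_{\mathcal{V}_{(U,\phi)}} \le \dim_B \mathcal{V}_{(U,\phi)} = \dim_B U.
\end{align*}

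The main obstacle is the interplay between the weighting sequence $w$ and the expansion constant $C$: for a generic decreasing $w$ tending to $0$, the product $w_{|n|}C^{|n|}$ may be unbounded, so $\iota$ need not be Lipschitz for that particular metric. The clean fix is to observe that all $d^w$ induce the same topology and the same box-counting dimension, so one is free to select $w$ with $w_k \le (2C)^{-k}$; under that choice $\sup_n w_{|n|}C^{|n|} \le 1$ and the argument goes through. I would state this reduction explicitly at the start of the proof. A secondary point to handle carefully is that the inequality $\|\phi^n(u)-\phi^n(v)\| \le C^{|n|}\|u-v\|$ for negative $n$ uses the bi-Lipschitz bound on $\phi^{-1}$, and one should note the iteration is valid because $\phi$ maps $U$ into $U$, so all intermediate points stay in the domain where the bound holds.
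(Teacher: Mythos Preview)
Your approach is essentially identical to the paper's: define the orbit map $\iota$ (the paper calls it $\Psi$), fix a weighting sequence decaying at least like $C^{-n}$ (the paper takes exactly $w_n = C^{-n}$, you suggest $(2C)^{-n}$), verify the bi-Lipschitz bounds in both directions, deduce $\dim_B \mathcal{V}_{(U,\phi)} = \dim_B U$, and then invoke Corollary~\ref{cor:dim_BV} together with Lemma~\ref{lmm:vishik_assumptions}.

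One correction worth making: your claim that ``all $d^w$ induce the same topology \emph{and the same box-counting dimension}'' is not correct in general---box-counting dimension is a metric invariant, not a topological one, and \cite[Theorem~2.6]{grigoryeva2018echo} only gives you the topology. The proper justification for the freedom to choose $w$ is that Lemma~\ref{lmm:vishik_assumptions} (hence Corollary~\ref{cor:dim_BV}) holds for \emph{any} admissible weighting sequence, while the left-hand side $\dim_B \mathcal{H}_{\mathcal{V}_{(U,\phi)}}$ is computed in $\mathbb{R}^{N_r}$ and is independent of $w$; thus you may pick $w$ to make the right-hand side as small as possible. This is precisely what the paper does by simply declaring $w_n = C^{-n}$ from the start, and your alternative ``pick $w_k = (2C)^{-k}$ outright'' is the right move.
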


\begin{proof}    
    We will show that there exists a weighting sequence $w = \{w_n\}_{n \in \mathbb{N}_0}$ such that 
   \begin{align*}
       \dim_B \mathcal{V}_{(U, \phi)} = \dim_B U
   \end{align*} 
   and the result will follow by Theorem \ref{thm:vishikchepyzhov}, Corollary \ref{cor:dim_BV} and Lemma \ref{lmm:vishik_assumptions}.
    Let $u \in U$ and define $\Psi:U \to \mathcal{V}_{(U, \phi)}$ such that 
    \begin{align*}
        \Psi(u) = (\ldots, \phi^{-n}(u), \ldots, \phi^{-1}(u), u, \phi(u), \ldots, \phi^n(u), \ldots). 
    \end{align*}
    The map $\Psi$ maps $u$ to its trajectory with $\Psi(U) = \mathcal{V}_{(U, \phi)}$. $\Psi$ is injective and continuous (since each projection is continuous) and thus a homeomorphism onto its image since $U$ is compact. The inverse $\Psi^{-1}:\Psi(U) \to U$ is given by 
    \begin{align*}
        \Psi^{-1}(\mathbf{u}) = \mathbf{u}[0].
    \end{align*}
    We define the weighting sequence $w:\mathbb{N}_0 \to (0, 1)$ as $w_n = C^{-n}$. 
    Then
    \begin{align*}
        d^w(\Psi(u), \Psi(v)) = \sup_{n \in \mathbb{Z}} w_{|n|} d(\phi^n(u), \phi^n(v)) 
        &\le \sup_{n \in \mathbb{Z}} w_{|n|} C^{|n|} d(u, v) = d(u, v).
    \end{align*}
    The inverse $\Psi^{-1}: \Psi(U) \to U$ also satisfies
    \begin{align*}
        d(\Psi(\mathbf{u}), \Psi(\mathbf{v})) = d(\mathbf{u}[0], \mathbf{v}[0]) \le d^w(\mathbf{u}, \mathbf{v})
    \end{align*}
    for any $\mathbf{u}, \mathbf{v} \in \mathcal{V}_{(U, \phi)}$. Thus $\Psi$ is a bi-Lipschitz map from which we conclude that $\dim_B \mathcal{V}_{(U, \phi)}= \dim_B U.$
\end{proof}
\subsection{Lower bound and an infinite-dimensional input sequence space} \label{ssec:input_seq_U_Z}
As mentioned earlier, lower bound for the dimension of attractors, even for autonomous systems, usually require specific information about the dynamics of the system. We will attempt to establish a lower bound for the box-counting dimension of the pullback attractor corresponding to the space of periodic input sequences of sufficiently large period in $\mathcal{U}= U^\mathbb{Z}$ and by extension for $\mathcal{U}$ itself. We prove in Theorem \ref{thm:fractal_dim_H} that if the $U$ interior of $U$ is non-empty $i.e$ $U$ contains an open set, then the pullback attractor corresponding to input sequences of period $m$ will also contain an open set for $m$ sufficiently large. Thus the pullback attractor has box-counting dimension $N_r$. This results is perhaps not surprising for the set $\mathcal{H}_{\mathcal{U}}$ since from \eqref{eq:H_n}, $H(\mathbf{u}) \in \mathcal{H}_{\mathcal{U}}$ depends on infinite past histories of $\mathbf{u}$ from the first term, where $\mathbf{u}$ takes all possible values in an infinite-dimensional metric space $\mathcal{U}$. 
\\
\\
%
    The proof of Theorem \ref{thm:fractal_dim_H} is based on the implicit function theorem \cite[Theorem 17.3]{apostol1974mathematical} and the local surjectivity Theorem \cite{abraham2012manifolds}, which we now state. 
\begin{theorem} [Implicit function theorem] \label{thm:IFT}
    Let $U \subseteq \mathbb{R}^n$ and $X \subseteq \mathbb{R}^m$ be open sets and $G:U \times X \to \mathbb{R}^m$ be a continuously differentiable function on $U \times X \subseteq \mathbb{R}^n \times \mathbb{R}^m$. Suppose $G(u_0, x_0) = 0$ for some  $u_0 \in U$ and $x_0 \in X$, and the Jacobian $D_2 G(u_0, x_0)$ is invertible. Then there exist an open set $V \subseteq \mathbb{R}^n$ containing $u_0$ and a unique continuously differentiable function $h:V \to \mathbb{R}^m$ such that 
    \begin{enumerate}
        \item $h(u_0) = x_0$ and $G(u, h(u)) = 0$ for every $u \in V$,
        \item The Jacobian of $h$ is given by 
        \begin{align*}
            D h(u) = -[D_2 G(u, h(u))]^{-1} [D_1 G(u, h(u))].
        \end{align*}
    \end{enumerate}
\end{theorem}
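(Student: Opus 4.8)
The plan is to prove this via the Banach fixed-point theorem, in the same spirit as the proof of Theorem \ref{thm:the_map_H}. Write $A := D_2 G(u_0, x_0)$, which is invertible by hypothesis. For $u$ near $u_0$ and $x$ near $x_0$, observe that $G(u,x) = 0$ holds if and only if $x$ is a fixed point of the map $T_u(x) := x - A^{-1} G(u, x)$. So the problem reduces to producing, for each $u$ near $u_0$, a unique fixed point of $T_u$ in a small closed ball $\overline{B}(x_0, r)$, and then studying how that fixed point depends on $u$.

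First I would check that $T_u$ is a $\tfrac{1}{2}$-contraction on such a ball for all $u$ in a neighborhood of $u_0$: since $D_2 T_u(x) = A^{-1}\big(D_2 G(u_0,x_0) - D_2 G(u,x)\big)$ and $D_2 G$ is continuous (as $G \in C^1$), one can choose $r > 0$ and a neighborhood of $u_0$ on which $\|D_2 T_u(x)\|_2 \le \tfrac{1}{2}$, so the mean-value inequality gives the contraction estimate on $\overline{B}(x_0, r)$. Next, using $G(u_0,x_0) = 0$ and continuity of $G$, shrink the neighborhood to an open set $V \ni u_0$ with $\|A^{-1} G(u, x_0)\| \le r/2$ for $u \in V$; together with the contraction estimate this shows $T_u$ maps $\overline{B}(x_0,r)$ into itself. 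Banach's theorem then yields, for each $u \in V$, a unique $h(u) \in \overline{B}(x_0,r)$ with $G(u, h(u)) = 0$, and uniqueness forces $h(u_0) = x_0$. Continuity of $h$ follows from the standard parameter-dependence estimate $\|h(u) - h(u')\| \le 2\,\|A^{-1}\big(G(u', h(u')) - G(u, h(u'))\big)\| \to 0$ as $u \to u'$, using continuity of $G$.

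The substantive part is differentiability. I would shrink $V$ once more so that $D_2 G(u, h(u))$ is invertible with uniformly bounded inverse for $u \in V$ — legitimate since $h$ is continuous, $D_2 G$ is continuous, and invertibility is an open condition. Fixing $u \in V$ and writing $B = D_1 G(u, h(u))$, $C = D_2 G(u, h(u))$, the candidate derivative is $L = -C^{-1} B$. Subtracting $G(u, h(u)) = 0$ from $G(u+k, h(u+k)) = 0$ and expanding $G$ to first order gives $0 = Bk + C\,(h(u+k) - h(u)) + o\big(\|k\| + \|h(u+k) - h(u)\|\big)$; applying $C^{-1}$ and using its boundedness first yields the a priori bound $\|h(u+k) - h(u)\| = O(\|k\|)$, which upgrades the error to $o(\|k\|)$ and hence gives $h(u+k) - h(u) - Lk = o(\|k\|)$, i.e. $Dh(u) = -[D_2 G(u, h(u))]^{-1}[D_1 G(u, h(u))]$. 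Continuity of this expression in $u$ (continuity of $D_1 G$, $D_2 G$, $h$, and of matrix inversion on invertible matrices) shows $h \in C^1$; local uniqueness of $h$ among $C^1$ functions with $h(u_0)=x_0$ and $G(\cdot,h(\cdot)) \equiv 0$ follows again from uniqueness of the fixed point.

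The main obstacle I anticipate is the differentiability step: one must carefully control the $o(\cdot)$ terms, which requires the a priori Lipschitz-type bound on $h$ extracted from the same identity, and one must arrange the neighborhoods so that $D_2 G(u, h(u))$ stays invertible with bounded inverse throughout. An alternative route that sidesteps some of this bookkeeping is to apply the inverse function theorem to $\Psi(u,x) := (u, G(u,x))$, whose derivative at $(u_0,x_0)$ is block lower triangular with invertible diagonal blocks $I$ and $D_2 G(u_0,x_0)$, and then read off $h(u)$ as the $x$-component of $\Psi^{-1}(u,0)$ — though this merely trades the work for a proof of the inverse function theorem.
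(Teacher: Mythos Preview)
Your proof sketch is the standard Banach fixed-point route to the implicit function theorem and is correct as outlined; the contraction setup, self-mapping estimate, parameter-continuity of the fixed point, and the differentiability argument via first-order expansion are all sound, with the usual caveat that the $o(\cdot)$ bookkeeping in the differentiability step needs care (which you already flag).

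However, there is nothing to compare against: the paper does not prove Theorem~\ref{thm:IFT}. It is stated as a classical result with a citation to Apostol \cite[Theorem 17.3]{apostol1974mathematical} and then used as a black box in the proofs of Theorem~\ref{thm:fractal_dim_H} and Theorem~\ref{lmm:open_echo_state}. So your proposal is not an alternative to the paper's argument but rather a proof of a background result the paper simply imports.
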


\begin{theorem} [Local surjectivity theorem]\label{lmm:open_mapping}
    Let $n \ge m$ and $h:\mathbb{R}^n \to \mathbb{R}^m$ be a continuously differentiable function. Suppose there exists $u \in \mathbb{R}^n$ such that $Dh(u)$ is surjective \textit{i.e.} it has rank $m$. Then there exists a neighbourhood $U$ of $u$ and $W$ of $h(u)$ such that $h:U \to W$ is surjective. In particular, if $V$ is an open subset of $\mathbb{R}^m$ and $Dh(u)$ is surjective for all $u \in V$. Then $h$ is an open mapping on $V$. 
\end{theorem}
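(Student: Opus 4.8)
The plan is to reduce the statement to the inverse function theorem by restricting $h$ to a suitable $m$-dimensional affine slice through $u$ on which the derivative is a linear isomorphism. Note first that the inverse function theorem is itself an immediate consequence of the implicit function theorem (Theorem \ref{thm:IFT}): given a $C^1$ map $\psi$ of $\mathbb{R}^m$ with $D\psi(t_0)$ invertible, apply Theorem \ref{thm:IFT} to $G(w,t) = \psi(t) - w$ at $(w_0, t_0)$ with $w_0 = \psi(t_0)$, noting that $D_2 G(w_0,t_0) = D\psi(t_0)$ is invertible, to obtain a $C^1$ local right inverse; hence no machinery beyond what the paper already states is needed.

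First I would use a linear-algebra observation: since $Dh(u)$ has rank $m$, its $m \times n$ matrix has an invertible $m \times m$ submatrix, say with columns indexed by $I = \{i_1 < \cdots < i_m\} \subseteq \{1,\dots,n\}$. Reorder the coordinates of $\mathbb{R}^n$ as $x = (t,s)$ with $t = (x_i)_{i\in I} \in \mathbb{R}^m$ and $s \in \mathbb{R}^{n-m}$ the remaining coordinates, and write $u = (t_0, s_0)$. Define the slice map $\psi(t) := h(t,s_0)$; it is $C^1$ and the matrix of $D\psi(t_0)$ is exactly the chosen invertible submatrix of $Dh(u)$. Applying the inverse function theorem to $\psi$ yields open sets $T_0 \ni t_0$ and $W \ni h(u) = \psi(t_0)$ with $\psi(T_0) = W$. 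Then for small $\delta > 0$ the product $U_0 = T_0 \times \{s : ||s - s_0|| < \delta\}$ is an open neighbourhood of $u$, and since $h(t,s_0) = \psi(t)$ we get $h(U_0) \supseteq h(T_0 \times \{s_0\}) = \psi(T_0) = W$; replacing $U_0$ by $U := U_0 \cap h^{-1}(W)$, which is still an open neighbourhood of $u$ because $h$ is continuous and $h(u) \in W$, gives $h(U) = W$. This proves the first assertion.

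For the "in particular" clause (where $V$ should be read as an open subset of the domain $\mathbb{R}^n$; the "$\mathbb{R}^m$" in the statement is a typo), I would take an arbitrary open $S \subseteq V$ and a point $p = h(u) \in h(S)$ with $u \in S$, then rerun the construction above at $u$ but with $T_0$ and $\delta$ chosen small enough that $U_0 \subseteq S$, which is possible since $S$ is open and contains $u$. This yields $h(S) \supseteq h(U_0) \supseteq W$, a neighbourhood of $p$, so $h(S)$ is a neighbourhood of each of its points and hence open; therefore $h$ is an open mapping on $V$.

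I do not anticipate a genuine obstacle here: the only analytic ingredient is the inverse function theorem (derived above from Theorem \ref{thm:IFT}), and the rest is the elementary bookkeeping of selecting the coordinate slice and shrinking the product neighbourhood $U_0$ so that it lies inside a prescribed open set. The one point worth stating carefully is the reduction of the inverse function theorem to the implicit function theorem, so that the whole argument stays within the results already available in the paper.
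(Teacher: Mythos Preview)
Your proof is correct. Note, however, that the paper does not actually prove this theorem: it is stated with a citation to \cite{abraham2012manifolds}, and the accompanying remark only gestures at the standard argument that submersions are locally projections (the local normal form for submersions), hence open maps.

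Your approach is genuinely more elementary than that route. Rather than invoking the full submersion theorem---which produces a local $C^1$ change of coordinates on the domain under which $h$ becomes a coordinate projection---you simply select an $m$-dimensional affine slice on which $Dh$ restricts to an isomorphism and apply the inverse function theorem directly. This bypasses the construction of the full local diffeomorphism and uses only Theorem~\ref{thm:IFT}, which is already stated in the paper; in that sense your argument is more self-contained relative to the paper's internal toolkit. The trade-off is that the submersion normal form gives slightly more (a local product structure on the fibres of $h$), but for the bare conclusions of local surjectivity and openness your slice argument is all that is needed. You are also right to flag the typo: the open set $V$ in the ``in particular'' clause must lie in the domain $\mathbb{R}^n$, not $\mathbb{R}^m$.
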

\begin{remark}
     The local surjectivity theorem is related to the fact that submersions are locally projections and thus are open mappings. It has been recently shown that it holds for an even larger class of functions \cite{saint2016open} so long as its set of critical values and critical points are not too big. 
\end{remark}

Now we state the main theorem of this section. 

\begin{theorem} \label{thm:fractal_dim_H}
    Let $U$, $X$ and $g$ be as in Definition \ref{def:skewprod}. Additionally, we assume that $\normalfont\interior\: U \ne \emptyset$ and $g: U \times X \to X$ is continuously differentiable. Let $m \in \mathbb{N}$ be such that $m N_{\text{in}} \ge N_r$. Let $\underline{\mathbf{u}} = (\underline{\mathbf{u}}_1, \underline{\mathbf{u}}_2, \ldots, \underline{\mathbf{u}}_m) \in U^m \subseteq \mathbb{R}^{m \times N_{\text{in}}}$ and the function $F:U^{m} \times  X \to \mathbb{R}^{N_r}$ defined by
    \begin{align} \label{def:the_map_F}
        F(\underline{\mathbf{u}}, x) = g(\underline{\mathbf{u}}_1, g(\underline{\mathbf{u}}_2, \ldots, g(\underline{\mathbf{u}}_m, x) \ldots )),
    \end{align}
    satisfies the following conditions
    \begin{enumerate}[label=\normalfont \textbf{G\arabic*}]
        \item 
        $||D_2 g(u, x)||_2 < 1$ for all $(u, x) \in U^{} \times  X $. \label{assume:F_contracting}
        \item  
        there exist $z \in X, \underline{\mathbf{v}} \in U^m$ such that $z = F( \underline{\mathbf{v}}, z)$ and the linear map $D_1 F( \underline{\mathbf{v}}, z):\mathbb{R}^{m \times N_{\text{in}}} \to \mathbb{R}^{N_r}$ is surjective $i.e$ it has rank $N_r$. \label{assume:F_surjective}
    \end{enumerate}
    Let $(U^m)^\mathbb{Z}$ be set of all $m$-periodic input sequences of the form 
    \begin{align*}
    ( \ldots, \underline{\mathbf{u}}_m, \underline{\mathbf{u}}_{m-1}, \ldots, \underline{\mathbf{u}}_1, \underline{\mathbf{u}}_m, \underline{\mathbf{u}}_{m-1}, \ldots, \underline{\mathbf{u}}_1, \ldots).
    \end{align*}
    Then $\mathcal{H}_{(U^m)^\mathbb{Z}}$ contains an open set and thus has box-counting dimension $N_r$. 
\end{theorem}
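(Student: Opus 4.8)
The plan is to identify $\mathcal H_{(U^m)^{\mathbb Z}}$ with the image of the map $p\colon U^m\to X$ sending $\underline{\mathbf u}$ to the unique fixed point of the $m$-step map $F(\underline{\mathbf u},\cdot)$ on $X$ (see \eqref{def:the_map_F}), to prove $p$ is $C^1$ on $\interior U^m$ with $\operatorname{rank}Dp=\operatorname{rank}D_1F$, and then to apply the local surjectivity theorem (Theorem \ref{lmm:open_mapping}) at a point furnished by condition \textbf{G2}. Two preliminary remarks: since $U\times X$ is compact and $\|D_2g\|_2$ continuous, \textbf{G1} gives $\mu:=\max_{U\times X}\|D_2g(u,x)\|_2<1$, hence by the chain rule $\|D_2F(\underline{\mathbf u},x)\|_2\le\mu^{m}<1$ on $U^m\times X$, so each $F(\underline{\mathbf u},\cdot)\colon X\to X$ is a $\mu^m$-contraction with a unique fixed point $p(\underline{\mathbf u})\in X$; and the standing hypothesis $mN_{\text{in}}\ge N_r$ is exactly the dimension condition under which Theorem \ref{lmm:open_mapping} applies to a map $\mathbb R^{mN_{\text{in}}}\to\mathbb R^{N_r}$.

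\emph{Step 1: $\mathcal H_{(U^m)^{\mathbb Z}}=p(U^m)$.} By \textbf{G1} and Theorems \ref{thm:the_map_H}--\ref{thm:H_entire_sln} the skew product flow has the echo state property, so by \eqref{eq:robinson} every fibre $\mathbf A_{\mathbf u}$ is the singleton $\{\mathbf x[0]\}$ of the unique entire solution. An $m$-periodic sequence is determined by one period, so $(U^m)^{\mathbb Z}$ is parametrised by $\underline{\mathbf u}\in U^m$; for the corresponding $\mathbf u$ (with $\mathbf u[1],\dots,\mathbf u[m]=\underline{\mathbf u}_m,\dots,\underline{\mathbf u}_1$) one has $\theta^{-m}(\mathbf u)=\mathbf u$ and $\Phi(m,\mathbf u,\cdot)=F(\underline{\mathbf u},\cdot)$, whence, using the cocycle property and the entire-solution identity, $\mathbf x[0]=F(\underline{\mathbf u},\cdot)^{j}(\mathbf x[-jm])\to p(\underline{\mathbf u})$ as $j\to\infty$ by the contraction. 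Thus $\mathbf A_{\mathbf u}=\{p(\underline{\mathbf u})\}$ and $\mathcal H_{(U^m)^{\mathbb Z}}=p(U^m)$.

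\emph{Step 2: $p\in C^1(\interior U^m,\mathbb R^{N_r})$.} The uniform contraction first gives that $p$ is Lipschitz: $\|p(\underline{\mathbf u})-p(\underline{\mathbf w})\|\le\mu^m\|p(\underline{\mathbf u})-p(\underline{\mathbf w})\|+L\|\underline{\mathbf u}-\underline{\mathbf w}\|$ with $L=\max_{U^m\times X}\|D_1F\|_2$. Extend $g$ to a $C^1$ map on an open neighbourhood of $U\times X$; then $F$ extends $C^1$-ly to an open neighbourhood $\mathcal O$ of $U^m\times X$ which, after shrinking, satisfies $\|D_2F\|_2<1$ on it. Fix $\underline{\mathbf u}_0\in\interior U^m$, $x_0:=p(\underline{\mathbf u}_0)$, and apply Theorem \ref{thm:IFT} to $G(\underline{\mathbf u},x):=F(\underline{\mathbf u},x)-x$: $G(\underline{\mathbf u}_0,x_0)=0$, and $D_2G(\underline{\mathbf u}_0,x_0)=D_2F(\underline{\mathbf u}_0,x_0)-I$ is invertible because $\|D_2F(\underline{\mathbf u}_0,x_0)\|_2<1$, so there are an open $\mathcal W\subseteq\interior U^m$ about $\underline{\mathbf u}_0$ and a $C^1$ map $h\colon\mathcal W\to\mathbb R^{N_r}$ with $h(\underline{\mathbf u}_0)=x_0$ and $h(\underline{\mathbf u})=F(\underline{\mathbf u},h(\underline{\mathbf u}))$. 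It remains to identify $h$ with $p$: on a small closed ball $\overline{B}(x_0,r)$, for $\underline{\mathbf u}$ near $\underline{\mathbf u}_0$ the map $F(\underline{\mathbf u},\cdot)$ is a contraction of $\overline{B}(x_0,r)$ into itself (using $\|D_2F\|_2<1$ on $\mathcal O$ and $F(\underline{\mathbf u}_0,x_0)=x_0$), hence has a unique fixed point there; both $h(\underline{\mathbf u})$ and $p(\underline{\mathbf u})$ lie in $\overline{B}(x_0,r)$ near $\underline{\mathbf u}_0$ by continuity, and both are fixed points of $F(\underline{\mathbf u},\cdot)$ (for $p$ because $p(\underline{\mathbf u})\in X$ and the extension agrees with $g$ on $X$), so they coincide. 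Therefore $p$ is $C^1$ on $\interior U^m$ with $Dp(\underline{\mathbf u})=[I-D_2F(\underline{\mathbf u},p(\underline{\mathbf u}))]^{-1}D_1F(\underline{\mathbf u},p(\underline{\mathbf u}))$; the prefactor is invertible, so $\operatorname{rank}Dp(\underline{\mathbf u})=\operatorname{rank}D_1F(\underline{\mathbf u},p(\underline{\mathbf u}))$.

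\emph{Step 3 and the main obstacle.} By \textbf{G2} (and uniqueness of the fixed point, so that $z=p(\underline{\mathbf v})$), $\operatorname{rank}D_1F(\underline{\mathbf v},p(\underline{\mathbf v}))=N_r$ for some $\underline{\mathbf v}\in U^m$. Since full rank is an open condition and $\underline{\mathbf u}\mapsto D_1F(\underline{\mathbf u},p(\underline{\mathbf u}))$ is continuous, while $\interior U^m=(\interior U)^m\ne\emptyset$, one obtains a point $\underline{\mathbf u}_0\in\interior U^m$ with $Dp(\underline{\mathbf u}_0)$ surjective; on a neighbourhood $\mathcal W\subseteq\interior U^m$ of $\underline{\mathbf u}_0$ where $Dp$ stays surjective, Theorem \ref{lmm:open_mapping} makes $p|_{\mathcal W}$ an open map, so $p(\mathcal W)\subseteq p(U^m)=\mathcal H_{(U^m)^{\mathbb Z}}$ is a nonempty open subset of $\mathbb R^{N_r}$; it contains a ball, hence $\dim_B\mathcal H_{(U^m)^{\mathbb Z}}=N_r$ by the stated properties of the box-counting dimension (the bound $\le N_r$ holding since $\mathcal H_{(U^m)^{\mathbb Z}}$ is a bounded subset of $\mathbb R^{N_r}$). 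I expect the main difficulties to be (i) in Step 2, showing that the implicit-function branch $h$ is genuinely the global fixed-point map $p$ — this is precisely where the uniform contraction \textbf{G1} is indispensable, via local uniqueness of fixed points — and (ii) the transfer in Step 3 of the rank condition from the point $\underline{\mathbf v}\in U^m$ supplied by \textbf{G2} to a parameter inside $\interior U^m$; this uses openness of the rank condition together with $\interior U\ne\emptyset$, and is immediate whenever $\underline{\mathbf v}\in\overline{\interior U^m}$ (e.g.\ when $U=\overline{\interior U}$).
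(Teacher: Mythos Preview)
Your proposal is correct and follows the same route as the paper: apply the implicit function theorem to $G(\underline{\mathbf u},x)=x-F(\underline{\mathbf u},x)$ (invertibility of $D_2G$ coming from \textbf{G1} via $\|D_2F\|_2<1$) to obtain a $C^1$ fixed-point branch with derivative $(I-D_2F)^{-1}D_1F$, then use \textbf{G2} together with the local surjectivity theorem to produce an open subset of $\mathcal H_{(U^m)^{\mathbb Z}}$. Your Step~1 identification $\mathcal H_{(U^m)^{\mathbb Z}}=p(U^m)$ and the $C^1$ extension of $g$ are spelled out more carefully than in the paper, and the boundary issue you flag in Step~3 --- transferring the rank condition from $\underline{\mathbf v}\in U^m$ into $\interior U^m$ --- is glossed over there too (the paper simply takes the IFT neighbourhood $\mathbf V_0$ inside $U^m$, tacitly assuming $\underline{\mathbf v}\in\interior U^m$, i.e.\ your condition $U=\overline{\interior U}$).
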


\begin{rmk_main}
    Condition \ref{assume:F_contracting} implies that, for each $\underline{\mathbf{u}} \in U^m$ fixed, $F(\underline{\mathbf{u}}, \cdot)$ is a contraction on $X$. Thus, by the Banach fixed-point theorem, there exists a unique fixed point $z = z(\underline{\mathbf{v}})$ corresponding to $\underline{\mathbf{v}}$, \textit{i.e.} $F(\underline{\mathbf{v}}, z) = z$. Condition \ref{assume:F_surjective} requires that for some $\underline{\mathbf{v}} \in U^m$, we have that the partial derivative of $F$ with respect to $\underline{\mathbf{u}}$ has full rank when evaluated at $(\underline{\mathbf{v}}, z)$. These two conditions are, in general, independent of each other. 
\end{rmk_main}

\begin{proof}[\textit{\textbf{Proof of Theorem  \ref{thm:fractal_dim_H}}}]
    Let $y$ be the unique solution to $F(\underline{\mathbf{w}}, y) = y$ for some fixed $\underline{\mathbf{w}} \in U^m$. From Theorem \ref{thm:H_entire_sln},  $y = H(\theta^{km}(\mathbf{w}))$ for each $k \in \mathbb{Z}$ where $\mathbf{w}$ is the periodic input sequence 
    \begin{align*}
        ( \ldots, \underline{\mathbf{w}}_m, \underline{\mathbf{w}}_{m-1}, \ldots, \underline{\mathbf{w}}_1, \underline{\mathbf{w}}_m, \underline{\mathbf{w}}_{m-1}, \ldots, \underline{\mathbf{w}}_1, \ldots).
    \end{align*}
    So for any subset $\mathbf{V} \subseteq U^m$, the set $\mathcal{H}_\mathcal{V}$ , $\mathcal{V} = \mathbf{V}^\mathbb{Z}$ of all such $y$'s as $\underline{\mathbf{w}}$ takes values in $\mathbf{V}$ is contained in $\mathcal{H}_{(U^m)^\mathbb{Z}}$. In particular, $\text{dim}_B \: \mathcal{H}_{(U^m)^\mathbb{Z}} \ge \text{dim}_B \:  \mathcal{H}_\mathcal{V}$. 
\\
\\
    Now let $G(\underline{\mathbf{u}}, x) = x - F(\underline{\mathbf{u}}, x)$. Then $G$ is continuously differentiable on $ U^m \times X$ with
    \begin{align*}
        D_2 G(\underline{\mathbf{u}}, x) = \mathbb{I} - D_2 g(\underline{\mathbf{u}}_1, y_1)  D_2 g(\underline{\mathbf{u}}_2, y_2)  \ldots  D_2 g(\underline{\mathbf{u}}_{m-1}, y_{m-1})  D_2 g(\underline{\mathbf{u}}_m, y_m)
    \end{align*}
    where 
    \begin{align} \label{def:y_js}
        y_j = g(\underline{\mathbf{u}}_{j+1}, y_{j+1}) \: \text{ for } j < m \: \text{ and } \: y_m = x. 
    \end{align}
    Since 
    \begin{align*}
        ||D_2 g(\underline{\mathbf{u}}_1, y_1) &  D_2 g(\underline{\mathbf{u}}_2, y_2)  \ldots  D_2 g(\underline{\mathbf{u}}_{m-1}, y_{m-1})  D_2 g(\underline{\mathbf{u}}_m, y_m) ||_2  \\
        &\le ||D_2 g(\underline{\mathbf{u}}_1, y_1) ||_2 \cdot || D_2 g(\underline{\mathbf{u}}_2, y_2) ||_2 \cdot  \ldots \cdot || D_2 g(\underline{\mathbf{u}}_{m-1}, y_{m-1}) ||_2 \cdot || D_2 g(\underline{\mathbf{u}}_m, y_m) ||_2 \\
        & < 1,
    \end{align*}
    $D_2 G(\underline{\mathbf{u}}, x)$ is invertible for all $x \in X, \underline{\mathbf{u}} \in U^m$ with 
    \begin{align*}
        [D_2 G(\underline{\mathbf{u}}, x)]^{-1} = \sum_{k = 0}^\infty [ D_2 g(\underline{\mathbf{u}}_1, y_1) &  D_2 g(\underline{\mathbf{u}}_2, y_2)  \ldots  D_2 g(\underline{\mathbf{u}}_{m-1}, y_{m-1})  D_2 g(\underline{\mathbf{u}}_m, y_m) ]^k.
    \end{align*}
    Since $G(\underline{\mathbf{v}}, z) = 0$, by the implicit function theorem (Theorem \ref{thm:IFT}), there exists an open neighbourhood $\mathbf{V}_0$ of $\underline{\mathbf{v}}$ and a unique continuously differentiable function $h:\mathbf{V}_0 \to X$ such that $h(\underline{\mathbf{v}}) = z$ and
    \begin{align*}
        G(\underline{\mathbf{u}}, h(\underline{\mathbf{u}})) = 0 
    \end{align*}
    for all $\underline{\mathbf{u}} \in \mathbf{V}_0$. We note that $h(\mathbf{V}_0) \subseteq \mathcal{H}_{(U^m)^\mathbb{Z}}$. As $D_1 F(z, \underline{\mathbf{v}})$ has rank $N_r$, there is an $N_r \times N_r$ submatrix of $D_1 F(\underline{\mathbf{v}}, z)$ which is invertible. Since $F$ is continuously differentiable, we can find a neighbourhood $X_1$ of $z$ and $\mathbf{V}_1$ of $\underline{\mathbf{v}}$ such that for all $x \in X_1$ and $\underline{\mathbf{u}} \in \mathbf{V}_1$, the corresponding submatrix of $D_1 F(\underline{\mathbf{u}}, x)$ remains invertible. This means $D_1 F(\underline{\mathbf{u}}, x)$ has rank $N_r$ for all $x \in X_1$ and $\underline{\mathbf{u}} \in V_1$. Take $\mathbf{V} = h^{-1}(X_1) \cap \mathbf{V}_0 \cap \mathbf{V}_1$. Hence $\mathbf{V}$ is an open neighbourhood of $\underline{\mathbf{v}}$ satisfying $h(\mathbf{V}) \subseteq X_1$, $D_1 F(h(\underline{\mathbf{u}}), \underline{\mathbf{u}})$ has rank $N_r$ and $G(\underline{\mathbf{u}}, h(\underline{\mathbf{u}})) = 0$ for all $\underline{\mathbf{u}} \in \mathbf{V}$. Thus, the linear map
    \begin{align*}
         D h(\underline{\mathbf{u}}) = -[D_2 G(\underline{\mathbf{u}}, h(\underline{\mathbf{u}}))]^{-1}  [D_1 G(\underline{\mathbf{u}}, h(\underline{\mathbf{u}}))] = [D_2 G(\underline{\mathbf{u}}, h(\underline{\mathbf{u}}))]^{-1}  [D_1 F(\underline{\mathbf{u}}, h(\underline{\mathbf{u}}))],
    \end{align*}
    has rank $N_r$ for all  $\underline{\mathbf{u}} \in \mathbf{V}$. Consequently, by Lemma \ref{lmm:open_mapping}, $h(\mathbf{V}) \subseteq \mathcal{H}_{(U^m)^\mathbb{Z}}$ is open in $\mathbb{R}^{N_r}$ and thus has box-counting dimension $N_r$. We conclude that
    \begin{align*}
        \text{dim}_B \: \mathcal{H}_{(U^m)^\mathbb{Z}} = N_r. 
    \end{align*}
\end{proof}

\subsubsection{Conditions of Theorem \ref{thm:fractal_dim_H} for ESNs} \label{sssec:application_echo}
    Conditions of Theorem \ref{thm:fractal_dim_H} for an ESN (without output feedback) are given by related conditions on the weight matrices $W$ and $W^\text{in}$, and possibly the input $\underline{\mathbf{v}}$ in condition \ref{assume:F_surjective}. While conditions \ref{assume:F_contracting} easily hold when $||W||_2 < 1$, condition \ref{assume:F_surjective} is more delicate. 
    To investigate condition \ref{assume:F_surjective}, we consider the $N_r$-fold iteration map 
    \[
    F(\underline{\mathbf{u}}, x) = \text{tanh} (W^\text{in}\underline{\mathbf{u}}_1 + W\text{tanh} (W^\text{in}\underline{\mathbf{u}}_2 + W\text{tanh} ( \ldots + W\text{tanh} (W^\text{in}\underline{\mathbf{u}}_{N_r} + Wx))).
    \]
    Assuming condition \ref{assume:F_contracting} is already satisfied, let $z \in [-1, 1]^{N_r}$ and  $\underline{\mathbf{v}} = (u, \ldots, u) \in U^{N_r}$ be such that $F(\underline{\mathbf{v}}, z) = z$. Let $D_{1k} F(\underline{\mathbf{v}}, z)$ be the partial derivative of $F$ with respect to the $k^\text{th}$ component of $\underline{\mathbf{v}}$. Then
    \begin{align*}
    D_1 F(\underline{\mathbf{v}}, z) &= \begin{bmatrix}
       D_{11} F(\underline{\mathbf{v}}, z)  & | & 
       D_{12} F(\underline{\mathbf{v}}, z)  & | & 
       \ldots  & | & 
       D_{1{N_r}} F(\underline{\mathbf{v}}, z)
    \end{bmatrix} \\
    &= \begin{bmatrix}
       S_1W^\text{in}  & | & 
       S_2    W^\text{in}  & | & 
       \ldots  & | & 
       S_{N_r}    W^\text{in}  
    \end{bmatrix} 
    \end{align*}
    where  
    \begin{align*}
        S_j &= \begin{cases}
            \diag (\sech^2 (W^\text{in}u + Wz)) &\quad\text{if} \: j = 1, \\
            \prod_{i = 1}^{j-1} \diag (\sech^2 (W^\text{in}u + Wz)) {W}&\quad\text{otherwise}.
        \end{cases}
    \end{align*}
    Taking 
    \[
    D = \diag (\sech^2 (W^\text{in}u + Wz)),
    \]
    we have that
    \begin{align*}
        D_1 F(\underline{\mathbf{v}}, z)  &= \begin{bmatrix}
       S_1W^\text{in}  & | & 
       S_2    W^\text{in}  & | & 
       \ldots  & | & 
       S_{N_r}    W^\text{in}  
    \end{bmatrix} \\
    &= \begin{bmatrix}
       DW^\text{in}  & | & 
       DW   W^\text{in}  & | & 
       \ldots  & | & 
       (DW)^{N_r-1}   W^\text{in}  
    \end{bmatrix}.
    \end{align*}
    To show that $D_1 F(\underline{\mathbf{v}}, z)$ is full rank, it is sufficient to show that it has an $N_r \times N_r$ submatrix which is full rank. If we take $C \in \mathbb{R}^{N_r \times 1}$ to be the first (or any) column of $W^\text{in}$, $D_1 F(\underline{\mathbf{v}}, z)$ has full rank if the $N_r \times N_r$ matrix 
    \begin{align} \label{eq:resembles_kalman}
\begin{bmatrix}
       DC  & | & 
       DW   C  & | & 
       \ldots  & | & 
       (DW)^{N_r-1}   C
    \end{bmatrix} \quad \text{has full rank.}
    \end{align} 
    Since $W$ and $W^\text{in}$ are randomly generated when training an ESN, a natural approach to \eqref{eq:resembles_kalman} is to show that $W$ and $W^\text{in}$ would satisfy it with probability $1$ with respect to the probability measure that is used to generate $W$ and $W^\text{in}$. This approach was used in \cite{hart2021strange} to show that when entries of $W$ and $W^\text{in}$ are drawn from a regular probability distribution\footnote{A probability distribution is said to be regular if the probability of observing an event with one element is $0$.}, the conditions for the echo state embedding theorem for linear reservoir systems hold with probability $1$. However, the nonlinear dependence of the diagonal matrix $D$ on $W$, $W^\text{in}$, and the fixed point $z$ makes it much harder to generalise this measure-theoretic approach to our setting. Thus, we have opted for a topological approach to \eqref{eq:resembles_kalman}. In particular, we will show that the weight matrices $(W, W^\text{in})$ with $||W||_2 < 1$ satisfying \eqref{eq:resembles_kalman} form a generic (open and dense) set in $\mathbf{\Lambda} \times \mathbb{R}^{N_r \times N_\text{in}}$ where
    \begin{align*}
        \mathbf{\Lambda} = \{W \in \mathbb{R}^{N_r \times N_r}: ||W||_2 < 1\}.
    \end{align*}
    This topological genericity is, in general, weaker than having \eqref{eq:resembles_kalman} holds with probability $1$. This is because a generic set is not necessarily a full measure set for any well-defined measure. For example, consider the rational numbers $\{q_1, q_2, ....,q_n, \ldots \}$ in $[0, 1]$. The union, $Q_k$, of open balls of radius $2^{-n-k},\: n, k \in \mathbb{N}$ centred at $q_n$ is open and dense in $[0, 1]$ and has (Lebesgue) measure at most $2^{-k}$. The set $\bigcap_{k \in \mathbb{N}} Q_{k}$, which now has measure $0$, is open and dense by the Category theorem of Baire \cite{munkres2000topology}. More counterexamples can be found in \cite{hunt1992prevalence}. 
    \\
    \\
    We start with the case where $U$ contain $0 \in \mathbb{R}^{N_\text{in}}$. In this case, the fixed point is $z = 0 \in \mathbb{R}^{N_r}$ for $\underline{\mathbf{v}} = (0, \ldots, 0) \in U^{N_r}$. Thus $D$ is the identity and the matrix in \eqref{eq:resembles_kalman} gives the Kalman controllability matrix \cite{kalman1970lectures, sontag2013mathematical}
     \begin{align} \label{eq:kalman}
\begin{bmatrix}
       C  & | & 
       W   C  & | & 
       \ldots  & | & 
       W^{N_r-1}   C
    \end{bmatrix},
    \end{align} 
which is known to be full rank for a generic choice of $W$ and $C$, and almost surely if the entries of $W$ and $C$ are sampled from a regular probability distribution \cite[Proposition 4.4]{hart2021strange}. 
\\
\\
Now suppose $0 \not \in U$. We start by observing that the genericity of the Kalman controllability matrix being full rank also means the set of triples $(W, C, D)$ for which \eqref{eq:resembles_kalman} holds is generic\footnote{The determinant of a matrix is polynomial function of its entries. It is known that if a polynomial function is non-vanishing, its roots form a closed nowhere subset of $\mathbb{R}$.} in $\mathbb{R}^{N_r \times N_r} \times \mathbb{R}^{N_r} \times \mathbb{R}^{N_rN_\text{in}}$. Thus for any choice of $(W, W^\text{in}) \in \mathbf{\Lambda} \times \mathbb{R}^{N_r \times N_\text{in}}$ and $u\in U$, we can find matrices ${W}_0$, ${W}_0^\text{in}$ and a diagonal matrix ${D}_0$ arbitrarily close to $W$, $W^\text{in}$ and $\normalfont \diag (\sech^2 (W^\text{in}u + Wz))$ respectively such that 
    \begin{align*}
\begin{bmatrix}
       {D}_0{W}_0^\text{in}  & | & 
       {D}_0{W}_0   {W}_0^\text{in}  & | & 
       \ldots  & | & 
       ({D}_0{W}_0)^{N_r-1}  {W}_0^\text{in}  
    \end{bmatrix}
\end{align*}
    has full rank. The problem is their no guarantee that we can find $\underline{\mathbf{w}} = (\tilde{u}, \ldots, \tilde{u}) \in U^{N_r}$ and $\tilde{z} \in [-1, 1]^{N_r}$ such that $F(\underline{\mathbf{w}}, \tilde{z}) = \tilde{z}$ and $\normalfont \diag (\sech^2 ({W}_0^\text{in}\tilde{u} + {W}_0\tilde{z}))$ corresponds to any such ${{D}_0}$. Even if we pick an open set $\mathbf{D}$ around ${D}_0$ such that for all $D \in \mathbf{D}$,
\begin{align} \label{eq:Kalman_nearby}
\begin{bmatrix}
       D{W}_0^\text{in}  & | & 
       D{W}_0   {W}_0^\text{in}  & | & 
       \ldots  & | & 
       (D{W}_0)^{N_r-1}   {W}_0^\text{in}
    \end{bmatrix}
\end{align}
remains full rank, it is still not obvious we can find an input $\tilde{u} \in U$ for which $\normalfont \diag (\sech^2 ({W}_0^\text{in}\tilde{u} + {W}_0\tilde{z})) \in \mathbf{D}$. The remainder of this section is to show that in any neighbourhood of $W$ and $W^\text{in}$, a choice of $W_0$, $W_0^\text{in}$ and $\tilde{u}$ corresponding to $D_0$ can be found.
\\
\\
 We will be assuming that $W^\text{in}u \ne 0$. For such choices of $W^\text{in}$ and for any $W \in \mathbf{\Lambda}$, the fixed point $z$ is non-zero. Additionally, the set of all such ${W}^\text{in} \in \mathbb{R}^{N_r \times N_\text{in}}$ is open and dense in $\mathbb{R}^{N_r \times N_\text{in}}$. Openness follows from the fact that the linear map $W^\text{in} \to W^\text{in}u$ is continuous while density follows from the that if $u_i \ne 0$ and 
 \begin{align*}
    \sum_{j = 1}^{N_\text{in}} W_{kj}^\text{in}u_j = 0,
 \end{align*}
 then the matrix $W_0^\text{in}$, which has the same entries as $W^\text{in}$ except that $W_{0ki}^\text{in} = W_{ki}^\text{in} + w$ for some $w \ne 0$, satisfies $W_0^\text{in} u \ne 0$. 
\\
\\
Considering $\mathbf{\Lambda} \times \mathbb{R}^{N_r \times N_\text{in}}$ in the subspace topology of $\mathbb{R}^{N_r^2 + N_rN_\text{in}}$ endowed with the norm $||\cdot||$, let
\begin{align*}
    \mathbf{M} := \{(W, W^\text{in}) \in \mathbf{\Lambda} \times \mathbb{R}^{N_r \times N_\text{in}}: \text{ such that }\eqref{eq:resembles_kalman} \: \text{holds} \}.
\end{align*}
The following theorem proves that $\mathbf{M}$ is dense in $\mathbf{\Lambda} \times \mathbb{R}^{N_r \times N_\text{in}}$. 
\begin{theorem} \label{lmm:open_echo_state}
    Let $W \in  \mathbf{\Lambda}$ and $W^\text{in} \in \mathbb{R}^{N_r \times N_\text{in}}$ be two full rank matrices. Suppose $u \in U$ is non-zero such that the unique fixed point $z = \tanh(Wz + W^\text{in} u)$ is also non-zero. Let $\varepsilon > 0$ be given. There exist $(W_0, W_0^\text{in}) \in \mathbf{\Lambda} \times \mathbb{R}^{N_r \times N_\text{in}}$ such that
   \begin{align*}
       ||(W_0, W_0^\text{in}) - (W, W^\text{in})|| \le \varepsilon, \\
       z_0 =  \tanh(W_0z_0 + W_0^\text{in} u)
   \end{align*} 
    and for $D = \normalfont \diag (\sech^2 (W_0z_0 + W_0^\text{in} u))$,
    \begin{align*}
    \begin{bmatrix}
       DW_0^\text{in}  & | & 
       DW_0   W_0^\text{in}  & | & 
       \ldots  & | & 
       (DW_0)^{N_r-1}   W_0^\text{in}  
    \end{bmatrix}
    \end{align*}
 has full rank.
\end{theorem}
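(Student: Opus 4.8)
The plan is to turn the statement into a density assertion about a single real-analytic function and then settle it by exhibiting one explicit example. Put $\Omega:=\mathbf{\Lambda}\times\mathbb{R}^{N_r\times N_\text{in}}$; it is convex, hence open and connected, because $\mathbf{\Lambda}=\{W:\|W\|_2<1\}$ is convex. For $(W_0,W_0^\text{in})\in\Omega$ the componentwise map $x\mapsto\tanh(W_0x+W_0^\text{in}u)$ is a contraction — it is the $1$-Lipschitz map $\tanh$ composed with an affine map of linear part $W_0$, $\|W_0\|_2<1$ — so by the Banach fixed-point theorem it has a unique fixed point $z_0=z_0(W_0,W_0^\text{in})$. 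Set $D_0:=\diag(\sech^2(W_0z_0+W_0^\text{in}u))$; since $W_0z_0+W_0^\text{in}u$ is the componentwise $\tanh^{-1}$ of $z_0$, in fact $D_0=\diag(1-(z_0)_1^2,\dots,1-(z_0)_{N_r}^2)$, whose diagonal entries lie in $(0,1]$, so $D_0$ is invertible and $\|D_0\|_2\le1$. With $C_0$ the first column of $W_0^\text{in}$, let $p(W_0,W_0^\text{in})$ be the determinant of the $N_r\times N_r$ matrix \eqref{eq:resembles_kalman} formed from $(D_0,W_0,C_0)$, so that $\mathbf{M}=\{p\neq0\}$. It suffices to show $\mathbf{M}$ is dense in $\Omega$: the given $(W,W^\text{in})$ lies in $\Omega$, so we may pick a point of $\mathbf{M}$ within $\varepsilon$ of it, and there — the columns of \eqref{eq:resembles_kalman} being among those of $[DW_0^\text{in}\mid\cdots\mid(DW_0)^{N_r-1}W_0^\text{in}]$ — that block matrix has full rank $N_r$, as required.

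The key step is that $p$ is real-analytic on $\Omega$. The equation $x-\tanh(W_0x+W_0^\text{in}u)=0$ has real-analytic left-hand side, and its Jacobian in $x$, namely $\mathbb{I}-D_0W_0$, is invertible everywhere on $\Omega$ since $\|D_0W_0\|_2\le\|D_0\|_2\|W_0\|_2<1$; hence by the real-analytic implicit function theorem $z_0(\cdot)$, and with it $D_0(\cdot)$ and the polynomial-in-entries $p$, is real-analytic on $\Omega$. Since $\Omega$ is open and connected, the identity theorem for real-analytic functions gives the dichotomy: either $p\equiv0$ on $\Omega$, or $\{p=0\}$ is closed with empty interior, in which case $\mathbf{M}$ is open and dense and we are done. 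Thus the whole proof reduces to producing a single $(W_0,W_0^\text{in})\in\Omega$ with $p(W_0,W_0^\text{in})\neq0$ — and, crucially, this witness may be placed anywhere in $\Omega$, since analyticity then propagates the non-vanishing to a dense set; this is exactly what removes any need for a delicate perturbation near $(W,W^\text{in})$.

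For the witness, take $W_0$ to be a small multiple of the cyclic shift (so $\|W_0\|_2<1$) and let $W_0^\text{in}$ be any matrix whose first column is $C_0=e_{N_r}$ (with $W_0^\text{in}=e_{N_r}$ if $N_\text{in}=1$). Whatever fixed point $z_0$ results, the matrix \eqref{eq:resembles_kalman} then turns out to be (anti-)triangular with diagonal entries that are products of powers of the small scalar and of the strictly positive numbers $1-(z_0)_i^2$, hence nonzero, so $p(W_0,W_0^\text{in})\neq0$. The part I expect to be fiddliest — the ``main obstacle'' — is pinning down the exact form of the matrix in \eqref{eq:resembles_kalman}, with its particular interleaving of the factor $D_0$, carefully enough to run this computation; should a $D_0$-independent witness fail to exist for that exact matrix, the fallback is to prescribe the fixed point itself, which is available because $u\neq0$ makes $W_0^\text{in}\mapsto W_0^\text{in}u$ surjective onto $\mathbb{R}^{N_r}$ and, for $W_0$ fixed, makes $\xi:=W_0^\text{in}u\mapsto z_0$ a bijection of $\mathbb{R}^{N_r}$ onto $(-1,1)^{N_r}$ (inverse $\zeta\mapsto\tanh^{-1}(\zeta)-W_0\zeta$); one then chooses $z_0$, hence $D_0$, so that \eqref{eq:resembles_kalman} is full rank, the only awkward case being $N_\text{in}=1$, where $C_0$ and $\xi$ are proportional. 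The contraction estimate, the real-analytic implicit function theorem and the identity theorem are all routine, and the hypotheses that $W,W^\text{in}$ are full rank and that $z\neq0$ are not needed — only $u\neq0$ enters.
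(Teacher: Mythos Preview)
Your proof is correct and takes a genuinely different route from the paper. The paper argues \emph{locally}: it applies the $C^1$ implicit function theorem to get the fixed-point map $h(\Omega)$, then shows via rank computations that $h$ and subsequently $P(\Omega)=\sech^2(\widetilde W h(\Omega)+\widetilde W^{\text{in}}u)$ are open mappings on small neighbourhoods; this lets it intersect the graph $\{(W,W^{\text{in}},P(W,W^{\text{in}}))\}$ with the generic set of triples $(W,W^{\text{in}},D)$ for which the Kalman-like matrix is full rank, producing a nearby good parameter. The hypothesis $z\neq 0$ is used precisely to make the block $R_z$ in $D_1G$ have rank $N_r$, and a further perturbation is needed to ensure no coordinate of $h(\Omega_1)$ vanishes so that $DP$ is full rank.

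Your approach replaces all of this with a \emph{global} argument: upgrade the implicit function theorem to the real-analytic category (legitimate since $\tanh$ is entire and $I-D_0W_0$ is invertible throughout $\Omega$), so that the determinant $p$ of \eqref{eq:resembles_kalman} is real-analytic on the convex, hence connected, set $\Omega$; then the identity theorem reduces density of $\mathbf{M}=\{p\neq 0\}$ to exhibiting a single witness. Your cyclic-shift witness $W_0=\lambda S$, $C_0=e_{N_r}$ works cleanly: whatever the precise interleaving of $D_0$ in \eqref{eq:resembles_kalman}, each column becomes a nonzero scalar multiple (a product of $\lambda$'s and the strictly positive $d_i=1-(z_0)_i^2$) of a distinct standard basis vector, so the matrix is anti-diagonal with nonzero entries. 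This is shorter, avoids the two open-mapping steps, and as you note renders the hypotheses that $W,W^{\text{in}}$ are full rank and $z\neq 0$ superfluous. The paper's approach, in exchange, yields the extra structural information that $(W,W^{\text{in}})\mapsto D$ is locally surjective, which is not needed here but is of independent interest.
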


\begin{proof}
   Let $G:\mathbf{\Lambda} \times \mathbb{R}^{N_r \times N_\text{in}} \times \mathbb{R}^{N_r}  \to \mathbb{R}^{N_r}$ be defined as 
   \begin{align*}
       G(\Omega, x) = x-\tanh(\widetilde{W}x + \widetilde{W}^\text{in} u), \: \Omega = (\widetilde{W}, \widetilde{W}^\text{in}).
   \end{align*}
   So, $G(\Omega_0, z)=0$ where $\Omega_0 = (W, W^\text{in})$. Since $||W||_2 < 1$, $D_2 G(\Omega_0, z)$ is invertible. Thus by the implicit function theorem, we can find an open neighbourhood $\mathbf{\Omega}_0$ of $\Omega_0$ and a unique continuously differentiable function $h:\mathbf{\Omega}_0 \to \mathbb{R}^{N_r}$ satisfying $G(\Omega, h(\Omega)) = 0$ for all $\Omega \in \mathbf{\Omega}_0$. Now we will show that $D_1 G(\Omega_0, z)$ is surjective and thus $h$ is locally an open mapping by Lemma \ref{lmm:open_mapping}. For $(w_1, w_2) \in \mathbb{R}^{M}$, $K = N_r^2 + N_r N_\text{in}$, we have that
\begin{align*}
   \underbrace{D_1 G(\Omega_0, z)}_{\mathbb{R}^{N_r \times K}} \cdot (w_1, w_2) = -\underbrace{\diag {(\sech^2(Wz + W^\text{in}u))}}_{\mathbb{R}^{N_r \times N_r}}(\underbrace{w_1z + w_2u}_{\mathbb{R}^{N_r \times K}})
\end{align*}
by the chain rule \cite{abraham2012manifolds}. Let $R((w_1, w_2)) = w_1z + w_2u$. Writing $w_1 = (w_{11}, w_{12}, \ldots, w_{1N_r^2})$, and $w_2 = (w_{21}, w_{22}, \ldots, w_{2N_rN_{\text{in}}})$, $R$ can be written as 
\begin{align*}
    R((w_1, w_2)) = \begin{bmatrix}
        R_z & | & R_u
    \end{bmatrix} \begin{bmatrix}
        w_1^T \\ w_2^T 
    \end{bmatrix}
\end{align*}
where $R_z$ and $R_u$ are the matrix of the linear maps $w_1 \to w_1z$ and $w_2 \to w_2u$. In particular, the matrix $R_z \in \mathbb{R}^{N_r \times N_r^2}$ is given by 
\begin{align*}
R_z = \begin{bmatrix}
z^T& 0 & \cdots &  0 \\
0 & z^T &\cdots & 0 \\
0 & 0 & \ddots &\vdots\\
\vdots &\vdots&\cdots & z^T 
\end{bmatrix}
\end{align*}
 which has rank $N_r$ since $z \ne 0$. Thus the derivative of $h$ at $\Omega_0$, given by
 \begin{align}
     D h(\Omega_0) = -[D_2 G(\Omega_0, z)]^{-1}[D_1 G(\Omega_0, z)],
 \end{align}
 has rank $N_r$. It follows that  $h$ is an open mapping on a sufficiently small neighbourhood $\mathbf{\Omega}$ of $\Omega_0$. The neighbourhood $\mathbf{\Omega}$ can be chosen to be inside the ball $B_\varepsilon(\Omega_0)$.
\\
\\
 The existence of the open mapping $h$ on $\mathbf{\Omega}$ means we can find $\Omega_1 = ({W}_1, {W}_1^\text{in})$ inside $B_\varepsilon(\Omega_0)$ such that none of the components of $h(\Omega_1)$ is $0$ \textit{i.e.} $(h(\Omega_1))_i \ne 0$ for all $i = 1, \ldots, N_r$. This also means none of the components of $W_1h(\Omega_1) + {W}_1^\text{in}u$ is $0$ since $h(\Omega_1) = \tanh{(W_1h(\Omega_1) + {W}_1^\text{in}u)}$. Now consider the function $P: \mathbf{\Omega} \to \mathbb{R}^{N_r}$ given by 
  \begin{align} \label{eq:P}
     P(\Omega) = \sech^2(\widetilde{W}h(\Omega) + \widetilde{W}^\text{in}u). 
 \end{align}
 $P$ is continuously differentiable on $\mathbf{\Omega}$ with 
\begin{align*}
    DP(\Omega_1)\cdot (w_1, w_2) &= \diag  (\tau_1(W_1 h(\Omega_1) + {W}_1^\text{in}u)) D(\tanh^{-1}(h(\Omega_1))\cdot (w_1, w_2) \\
    &= \diag  (\tau_1(W_1 h(\Omega_1) + {W}_1^\text{in}u)) \diag(\tau_2(h(\Omega_1)) )Dh(\Omega_1)\cdot (w_1, w_2) 
\end{align*} 
where 
\begin{align*}
    \tau_1(s) = \frac{d }{ds}\sech^2(s) = -2\sech^2(s) \tanh(s), \: s \in \mathbb{R} \: \text{ and }\: \tau_2(t) =\frac{d }{dt}\tanh^{-1}(t) = \frac{1}{1-t^2}, \: -1 < t < 1.
\end{align*}
Since
\[
\tau_1(s) = 0 \iff s = 0 
\]
and  none of the entries of $W_1 h(\Omega_1) + {W}_1^\text{in}u$ is $0$,
the diagonal matrix $ \diag  (\tau_1(W_1 h(\Omega_1) + {W}_1^\text{in}u))$ is invertible. 
Hence $DP(\Omega_1)$ has rank $N_r$ and $P$ is also an open mapping on a sufficiently small neighbourhood $\mathbf{\Omega}_1$ of $\Omega_1$. Again, we can also choose $\mathbf{\Omega}_1$ to be inside the ball $B_\varepsilon(\Omega_0)$. 
\\
\\
Thus the set  
\begin{align*}
    \mathbf{M}_{\mathbf{\Omega}_1} := \{(W, W^\text{in}, D) \in \mathbf{\Omega}_1 \times P(\mathbf{\Omega}_1) : D = P(W, W^\text{in})\}. 
\end{align*}
is an open and non-empty subset of $\mathbf{\Lambda} \times \mathbb{R}^{N_r \times N_\text{in}} \times \mathbb{R}^{N_r}$. Since the set $\mathbf{N}$ of triples $(W, W^\text{in}, D) \in \mathbb{R}^{N_r^2 + N_rN_\text{in} + N_r}$ for which 
\begin{align*}
\begin{bmatrix}
       DW^\text{in}  & | & 
       DW   W^\text{in}  & | & 
       \ldots  & | & 
       (DW)^{N_r-1}   W^\text{in}  
    \end{bmatrix} \quad \text{has full rank},
\end{align*}
is open and dense in $\mathbb{R}^{N_r^2 + N_rN_\text{in} + N_r}$, $\mathbf{M}_{\mathbf{\Omega}_1} \cap \mathbf{N}$ is open and dense in $\mathbf{M}_{\mathbf{\Omega}_1}$. Consequently, we can choose $(W_0, W_0^\text{in})$ in $B_\varepsilon(\Omega_0)$ such that for $D_0 = \diag P(W_0, W_0^\text{in})$,
    \begin{align*}
    \begin{bmatrix}
       D_0W_0^\text{in}  & | & 
       D_0W_0   W_0^\text{in}  & | & 
       \ldots  & | & 
       (D_0W_0)^{N_r-1}   W_0^\text{in}  
    \end{bmatrix}
    \end{align*}
has full rank. 

\end{proof}
Now we show openness. The proof of Theorem \ref{lmm:open_echo_state} actually establishes that $\mathbf{M}$ is open in $\mathbf{\Lambda} \times \mathbb{R}^{N_r \times N_\text{in}}$, but we will state and prove it here for completeness. 

\begin{theorem} 
    The set $\mathbf{M}$ is an open subset of $\mathbf{\Lambda} \times \mathbb{R}^{N_r \times N_\text{in}}$.
\end{theorem}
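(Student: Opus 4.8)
The plan is to realise $\mathbf{M}$ as the subset of $\mathbf{\Lambda} \times \mathbb{R}^{N_r \times N_{\text{in}}}$ on which a continuous real-valued function (a determinant) does not vanish; since $\mathbf{\Lambda}$ is already open in $\mathbb{R}^{N_r \times N_r}$, openness of $\mathbf{M}$ then follows at once. Concretely, fix an arbitrary $(W_0, W_0^{\text{in}}) \in \mathbf{M}$; I want a neighbourhood of it contained in $\mathbf{M}$. The only step requiring care is continuity of the fixed point, which I would take over verbatim from the proof of Theorem~\ref{lmm:open_echo_state}: applying the implicit function theorem to $G(W, W^{\text{in}}, x) = x - \tanh(W x + W^{\text{in}} u)$ (for the fixed input $u$), whose partial derivative in $x$, namely $\mathbb{I} - \diag(\sech^2(W x + W^{\text{in}} u))\, W$, is invertible whenever $W \in \mathbf{\Lambda}$ since $\|\diag(\sech^2(\cdot))\, W\|_2 \le \|W\|_2 < 1$. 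This produces an open neighbourhood $\mathbf{O} \subseteq \mathbf{\Lambda} \times \mathbb{R}^{N_r \times N_{\text{in}}}$ of $(W_0, W_0^{\text{in}})$ and a $C^1$ (hence continuous) map $h : \mathbf{O} \to \mathbb{R}^{N_r}$; by uniqueness of the fixed point (Banach fixed-point theorem, valid because $\|W\|_2 < 1$), $h(W, W^{\text{in}})$ is exactly the fixed point $z$ solving $z = \tanh(W z + W^{\text{in}} u)$.

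Next I would feed this into the rank condition. On $\mathbf{O}$ the diagonal matrix $D(W, W^{\text{in}}) = \diag\big(\sech^2(W^{\text{in}} u + W\, h(W, W^{\text{in}}))\big)$ depends continuously on $(W, W^{\text{in}})$, and so does the column $C(W^{\text{in}})$ obtained as the first column of $W^{\text{in}}$; hence the entries of the $N_r \times N_r$ matrix
\begin{align*}
\mathcal{K}(W, W^{\text{in}}) = \begin{bmatrix} D C & | & D W C & | & \cdots & | & (D W)^{N_r - 1} C \end{bmatrix}
\end{align*}
appearing in \eqref{eq:resembles_kalman} are polynomials in the (continuously varying) entries of $D$, $W$ and $C$, so $(W, W^{\text{in}}) \mapsto \det \mathcal{K}(W, W^{\text{in}})$ is continuous on $\mathbf{O}$. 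Because an $N_r \times N_r$ matrix has full rank exactly when its determinant is nonzero, \eqref{eq:resembles_kalman} holds at a point of $\mathbf{O}$ iff $\det \mathcal{K} \ne 0$ there; as $(W_0, W_0^{\text{in}}) \in \mathbf{M}$ forces $\det \mathcal{K}(W_0, W_0^{\text{in}}) \ne 0$, continuity yields a smaller neighbourhood $\mathbf{O}'$ of $(W_0, W_0^{\text{in}})$ on which $\det \mathcal{K}$ never vanishes, and then $\mathbf{O}' \subseteq \mathbf{M}$. Since $(W_0, W_0^{\text{in}})$ was arbitrary, this gives openness of $\mathbf{M}$.

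I expect the single non-routine ingredient to be the continuous dependence of $z$ on $(W, W^{\text{in}})$, and this is precisely where the restriction to $\mathbf{\Lambda}$ earns its keep: $\|W\|_2 < 1$ simultaneously forces $z$ to be the unique fixed point (so that $h$ genuinely equals $z$) and makes $D_2 G$ invertible (so that the implicit function theorem applies); on all of $\mathbb{R}^{N_r \times N_r}$ neither would hold. Everything else --- continuity of $W \mapsto \|W\|_2$, of matrix multiplication and of $\sech^2$, and the polynomial (hence continuous) dependence of a determinant on the matrix entries --- is elementary. Indeed, the proof of Theorem~\ref{lmm:open_echo_state} already constructed the continuous map $h$ near any base point and showed that the full-rank locus of such Kalman-type matrices is open, so the present statement amounts to recording that the composition of these two observations is an open set.
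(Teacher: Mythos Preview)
Your proposal is correct and follows essentially the same approach as the paper's proof: both arguments obtain continuous dependence of the fixed point $z$ on $(W,W^{\text{in}})$ via the implicit function theorem (exactly as in the proof of Theorem~\ref{lmm:open_echo_state}), deduce that $D$ varies continuously, and then observe that the determinant of the Kalman-type matrix is a continuous function whose non-vanishing locus is open. The only cosmetic difference is that the paper phrases the conclusion globally as $\mathbf{M} = (\det Q)^{-1}(\mathbb{R}\setminus\{0\})$, whereas you work pointwise by fixing $(W_0,W_0^{\text{in}})$ and producing a neighbourhood; the content is the same.
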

\begin{proof}
The proof of Theorem \ref{lmm:open_echo_state} also establishes that, if $||W|| < 1$, the function $P$ \eqref{eq:P} is locally continuously differentiable. Thus, the diagonal matrix $D(W, W^\text{in}) = \diag P(W, W^\text{in}) = \diag{(\sech^2(Wh(W, W^\text{in}) + W^\text{in}u))}$ depends continuously on the entries of $W$ and $W^\text{in}$. Let $C$ be the first column of $W^\text{in}$ and $\det Q(W, W^\text{in})$ be the determinant of the matrix 
\begin{align*}
 Q = \begin{bmatrix}
       D C  & | & 
       DW C  & | & 
       \ldots  & | & 
       (DW)^{N_r-1}   C  
    \end{bmatrix}.
\end{align*}
Since $\det Q(W, W^\text{in})$ is a polynomial function of the entries of $C$, $W$ and $D$, it is continuous in the entries of $W$ and $W^\text{in}$. Thus $\det Q^{-1} (\mathbb{R} \backslash \{0\}) \cap (\mathbf{\Lambda} \times \mathbb{R}^{N_r \times N_\text{in}}) = \mathbf{M}$ is open.
\end{proof}

\section{Numerical Experiments} \label{sec:numerical}
We can numerically verify the upper bound in Theorem \ref{thm:vishikchepyzhov} and the lower bound in Theorem \ref{thm:fractal_dim_H} by generating ESNs of moderate size, driving them with input sequences satisfying the conditions of each theorem. For the upper bound, the Lorenz and Rössler system were used to drive the ESNs and the box-counting dimension was estimated for the set containing all states after a very long washout. The long washout here is chosen to be about $95\%$ of the timesteps. For the lower bound, $N_\text{in}$-dimensional periodic input sequences of varying periods $m$ were used to drive another batch of ESNs. In this case, points on the pullback attractor corresponds to fixed points of $m$-fold iteration map and its iterates upto $m-1$. Since the fixed point problem can be solved reasonably fast, we increasingly sample more input sequences as the dimension of the ESN increases. This is mainly due to fact that if a set extended by $1$ dimension, one would need exponentially more points to cover it with balls of the same radius. 
\\
\\
To verify the lower bound, ESNs with $N_r = 5, 10$ and $15$ hidden units were driven with $50$, $100$ and $10000$ random initial periodic input sequences of dimension $N_\text{in} = 2, 5$ and $10$ and periods $m = 5, 10$ and $15$, respectively. The periodic input sequences take their values in $[0, 1]$. The entries of the weight matrices $W$ and $W^\text{in}$ of each ESN was randomly sampled between $-1$ and $1$ and then $W$ rescaled so that $||W||_2 = 0.99$. Both the fixed points and their iterates up to $m-1$ were included in calculating the box-counting dimension. The box-counting dimension is then approximated using the algorithm of Grassberger and Procaccia \cite{grassberger1983measuring} implemented in DynamicalSystems.jl \cite{datseris2022nonlinear}\footnote{The algorithm of \cite{grassberger1983measuring} actually computes the $q$-order Renyi dimension where $q = 2$. The box-counting dimensions corresponds to the $0$-order Renyi dimension and is the largest of all $q$-order Renyi dimensions for $q \ge 0$. We have opted to compute the $2$-order Renyi dimension using \cite{grassberger1983measuring} since it is more accurate than algorithms for computing the box-counting dimension. See \cite{datseris2022nonlinear} for more details.}. The result obtained are summarised in the Table \ref{table:fractal} below. 

\begin{table}[H] \label{table:fractal}
\begin{minipage}{4.1cm}
\centering
\begin{tabular}{ |p{3cm}||p{3cm}|p{3cm}|p{4cm}|  }
 \hline
$N_r$ & $N_\text{in}$ & $m$ & $\dim_B$  \\
 \hline
$5$ & $2$ & $30$ & $4.368$\\
 $10$&   $5$  &$60$ & $9.075$\\
 $15$ & $10$ &$100$ & $13.327$ \\
 \hline
\end{tabular}
\end{minipage}
   \caption{Box-counting dimension estimates for the set of fixed points and their $m-1$ iterates of the $m$-fold iteration map of an ESN with $N_r$ units for inputs in $U^m$, $U \subseteq \mathbb{R}^{N_\text{in}}$.} 
\end{table}

 For the upper bound, we drive echo state networks with $N_r = 10, 20$ and $30$ hidden units with input sequence coming from trajectories the Lorenz system \cite{lorenz1963deterministic}
\begin{align*}
    \dot{u}_1 &= \sigma(u_2 - u_1)\\
    \dot{u}_2 &= u_1(\rho - u_3) - u_2\\
    \dot{u}_3 &= u_1u_2 - \beta u_3,
\end{align*}
and the Rössler system \cite{rossler1976equation}
\begin{align*}
    \dot{u}_1 &= -u_2 - u_3\\
    \dot{u}_2 &= u_1 + au_2\\
    \dot{u}_3 &= b + u_3(u_1 - c)
\end{align*}
for classical parameters $\rho = 28, \sigma = 10, \beta = 8/3$, and  $a = b = 0.2$ and $c = 5.7$. We randomly sample $20$ initial conditions close to each attractor and then numerically time-integrated them with a step size of $0.01$ from $t = 0$ to $t = 50$. The first $t = 10$ units of time were discarded as transient. The trajectories then drive the ESNs whose weight matrices $W$ and $W^\text{in}$ were generated from a uniform distribution between $-1$ and $1$ with the matrix $W$ scaled such that $||W||_2 = \rho$, $\rho \in (0, 1.5)$. The ESNs are initialised at the origin and then driven with both the full systems (from each attractor) and projection $u_1$ to the first component, scaled by $0.01$. This scaling ensures that ESN states does not saturate at the tails. All of the ESNs are let to run for $4000$ time-steps. The final $200$ time-steps were recorded in an $4000 \times N_r$ matrix. Figure \ref{fig:lorenz_rossler} shows the estimated box-counting dimension of the resulting trajectories from each of the systems in orange (Lorenz attractor: $2.08$ and Rössler attractor: $1.85$) and the corresponding ESNs. The upper bound fail for most values of $\rho > 1$, which could be due to lost of ESP and possibly the emergence of pullback attractors with continuous fibres. Remarkably, the ESN with $N_r = 10$ seems to have box-counting dimension below that of the attractor for most values of $\rho \in (0, 1.5)$. The dash black line is the inverse of the absolute value of the most negative Lyapunov exponent of each system. Estimates of the box-counting dimension below this tend to be closer to estimate from the trajectory of the system, while estimates above this tend to be smaller than the systems for $\rho < 1$. This maybe related to phenomenon observed in \cite{hart2024attractor}, where ESNs are unable to reconstruct a chaotic attractor if they contract slower than the inverse of the absolute value of the most negative Lyapunov exponent or in \cite{hart2020embedding} where an ESN with spectral radius (which is smaller than $\rho$) equal $1$ was able to reconstruct all but the negative Lyapunov exponent of the Lorenz system. Similar behaviour is observed in \cite{pathak2017using}. 


 \begin{figure} [H]  \label{fig:lorenz_rossler}
    \centering
    \includegraphics[width=0.6\textwidth, height=0.6\textwidth, keepaspectratio]{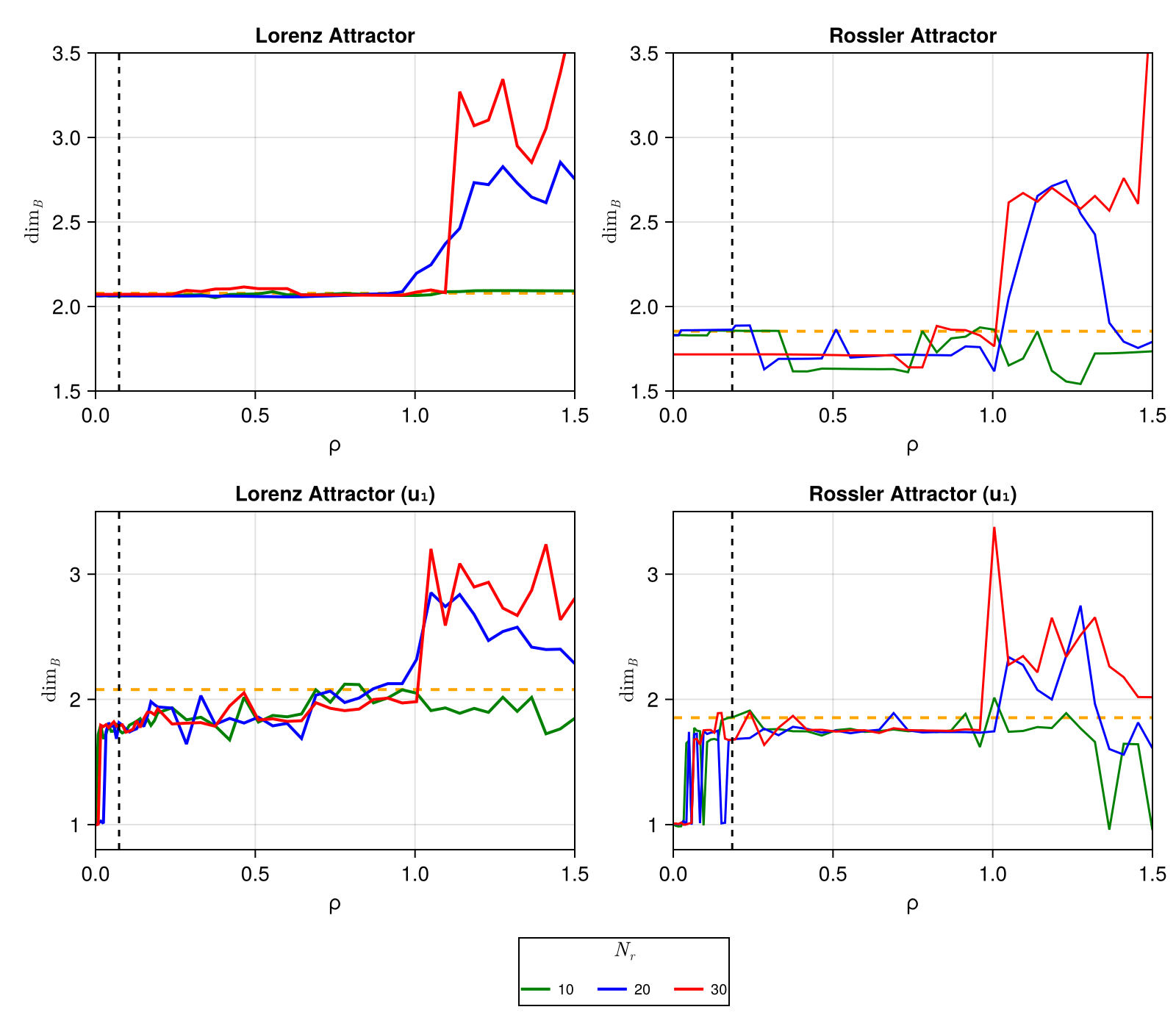}
    \caption{Box-counting dimension of the set of states from ESNs with $N_r = 10, 20$ and $30$ hidden units driving with the dynamics of the Lorenz (left) and Rössler attractors (right), as well as their first components $u_1$. The orange dash line is the estimate box-counting dimension for the trajectories of each system that was used to drive the ESNs. The dash black vertical line is the rate of contraction $\rho$ is equal to the inverse of the absolute value of the most negative Lyapunov exponents of each system.}
\end{figure}


 \section{Discussions and conclusions}  \label{sec:discussions}
 In this paper, we provide two important results for the box-counting dimension of the pullback attractor of reservoir systems. First, using a result of Vishik and Chepyzhov \cite{chepyzhov2002attractors}, we demonstrate that the box-counting dimension of the pullback attractor of a driven reservoir system is bounded above by the box-counting dimension of the space of input sequences in the product topology. We use this result to show that, in particular, the pullback attractor corresponding to input sequences coming from a trajectory of bi-Lipschitz homeomorphism $\phi$ on a compact set $U \subseteq \mathbb{R}^{N_\text{in}}$ is bounded above by $\dim_B U$. It would be interesting to establish a lower bound for the box-counting dimension of the pullback attractor in this case. The main preliminary result used to prove lower bounds on the dimension of sets is Frostman Lemma \cite{mattila1999geometry} and its variants. The Frostman Lemma is based on a well-defined probability (or $\sigma$-finite) measure. However, it is unclear how one can define a measure on the pullback attractor. As is often the case, we can work in the extended state space $\mathcal{V}_{(U, \phi)} \times X$ to get an autonomous dynamical system. Even then, an invariant measure on the input dynamics may be required for any well-defined product measure on the product space. Our experiments with the Lorenz and Rössler attractors suggest that, if $\mu$ is the invariant measure on each of these attractors, the invariant measure on the extended space of the input and the reservoir is just a product of $\mu$ with itself.
 \\
 \\
 The second main result of this paper is that, if $U$ contains an open set, the pullback attractor corresponding to input sequences in $U^\mathbb{Z}$ always contains an open set, and so has dimension equal to that of the driven reservoir. The proof technique relied on a subset of input sequences composed of periodic input sequences whose period is larger than the dimension of the reservoir. This essentially reduces the problem to a fixed-point problem, where tools such as the implicit function theorem and the local surjectivity theorem can be applied. 
\\
\\
A significant amount of work has already been done in training reservoirs on dynamical systems and utilising autonomous dynamics in the prediction phase to infer properties of the dynamical system, such as fractal dimension and  Lyapunov exponents \cite{pathak2017using}, bifurcation and tipping points \cite{patel2023using, kong2021machine}, persistent homology \cite{hart2020embedding} etc. The accuracy of these methods and how it relates to the dimension or number of neurons in the reservoir, as well as the hyperparameters of the network, such as the spectral radius, is yet to be studied. Although nonautonomous dynamical systems theory offers a way to link the dynamics of the input to those of the reservoir, it is not as fully developed as the theory of autonomous dynamical systems. Thus, tackling these questions in generality is a very challenging task. This reason is part of the motivation for the research conducted in this paper. Results of Section \ref{sec:numerical} suggest that for specific quantities, such as the box-counting dimension, it is not necessary to train the reservoir to obtain reasonable estimates. Indeed, if generalised synchronisation is an embedding, this can be done for any dynamics invariant. One of the main consequences of computing these quantities before training is that we can study the regime where their estimates have very high precision, and thus use this as a means to inform how hyperparameters, such as the spectral radius, should be chosen before training. 

\section*{Acknowledgements}
  The author thanks Peter Ashwin and George Datseris for their insightful comments. The author would also like to thank an anonymous reviewer whose suggestions greatly enhance the material in this paper. This work originated from the author's PhD thesis, funded by EPSRC studentship funding via EP/W523859/1. The author is currently supported by an ARIA grant through SCOP-PR01-P003, under the AdvanTip project. For open access purposes, the author has applied a Creative Commons Attribution (CC BY) licence to any Author Accepted Manuscript version resulting from this submission.

\bibliographystyle{plain}
\bibliography{references}

\end{document}